\renewcommand{\email}[1]{\emailname: #1} 
\renewenvironment{proof}{\noindent{\itshape Proof.}}{\smartqed\qed}
\newcommand{\bsa}{{\boldsymbol{a}}}
\newcommand{\bsb}{{\boldsymbol{b}}}
\newcommand{\bsf}{{\boldsymbol{f}}}
\newcommand{\bsg}{{\boldsymbol{g}}}
\newcommand{\bsk}{{\boldsymbol{k}}}
\newcommand{\bsl}{{\boldsymbol{l}}}
\newcommand{\bsx}{{\boldsymbol{x}}}
\newcommand{\bsz}{{\boldsymbol{z}}}
\newcommand{\bsX}{{\boldsymbol{X}}}
\newcommand{\bszero}{{\boldsymbol{0}}} 
\newcommand{\bsone}{{\boldsymbol{1}}}  
\newcommand{\bsbeta}{{\boldsymbol{\beta}}}
\newcommand{\bsmu}{{\boldsymbol{\mu}}}
\newcommand{\bsDelta}{{\boldsymbol{\Delta}}}
\newcommand{\bbE}{{\mathbb{E}}}
\newcommand{\bbK}{{\mathbb{K}}}
\newcommand{\bbZ}{{\mathbb{Z}}}
\newcommand{\N}{{\mathbb{N}}} 
\newcommand{\R}{{\mathbb{R}}} 
\DeclareSymbolFont{bbold}{U}{bbold}{m}{n}
\DeclareSymbolFontAlphabet{\mathbbold}{bbold}
\newcommand{\calB}{{\mathcal{B}}}
\newcommand{\calC}{{\mathcal{C}}}
\newcommand{\calO}{{\mathcal{O}}}
\newcommand{\calU}{{\mathcal{U}}}
\newcommand{\frakC}{{\mathfrak{C}}}
\providecommand{\argmin}{\operatorname*{argmin}}
\newcommand{\FTDFigDirectory}{.} 
\newcommand\widecheck[1]{%
\savestack{\tmpbox}{\stretchto{%
  \scaleto{%
    \scalerel*[\widthof{\ensuremath{#1}}]{\kern-.6pt\bigwedge\kern-.6pt}%
    {\rule[-\textheight/2]{1ex}{\textheight}}
  }{\textheight}%
}{0.5ex}}%
\stackon[1pt]{#1}{\scalebox{-1}{\tmpbox}}%
}
\def\abs#1{\ensuremath{\left \lvert #1 \right \rvert}}
\begin{document}
	\allowdisplaybreaks

\title*{Adaptive Quasi-Monte Carlo Methods for Cubature}

\author{Fred J. Hickernell \and Llu\'is Antoni Jim\'enez Rugama  \and Da Li}

\institute{
	Fred J. Hickernell (\Letter), Llu\'is Antoni Jim\'enez Rugama \and Da Li 
	\at Illinois Institute of Technology, RE 208, 10 W. 32$^{\text{nd}}$ St., Chicago, USA \\
	\email{hickernell@iit.edu; ljimene1@hawk.iit.edu; dli37@hawk.iit.edu}
}



\maketitle

\index{Hickernell, F. J.}
\index{Jim\'enez Rugama, Ll. A.}
\index{Li, D.}

\abstract{
High dimensional integrals can be approximated well by quasi-Monte Carlo methods.  
However, determining the number of function values needed to obtain the desired 
accuracy is  difficult without some upper bound on an appropriate semi-norm of the 
integrand.  This challenge has motivated our recent development of theoretically 
justified, adaptive  cubatures based on digital sequences and lattice nodeset sequences.  
Our adaptive cubatures are based on error bounds that depend on the discrete Fourier 
transforms of the integrands.  These cubatures are guaranteed for integrands belonging 
to cones of 
functions 
whose true Fourier coefficients decay steadily, a notion that is made mathematically 
precise.  
Here we describe these new cubature rules and extend them in two directions.  First, we 
generalize the error criterion to allow both absolute and relative error tolerances.  
We also demonstrate how to estimate a function of 
several 
integrals to a given tolerance.  This situation 
arises in the computation of Sobol' indices.  Second, we describe how to use control 
variates in adaptive quasi-Monte cubature while appropriately estimating the control 
variate coefficient.
}

\section{Introduction}
An important problem studied by Ian Sloan is evaluating
multivariate integrals by quasi-Monte Carlo methods.  After perhaps a change of variable, 
one may pose the problem as constructing an accurate approximation to
\begin{equation*}
\mu = \int_{[0,1)^d} f(\bsx) \, \D \bsx,
\end{equation*}
given a black-box function $f$ that provides $f(\bsx)$ for any $\bsx \in [0,1)^d$.  
Multivariate integrals arise in applications such as evaluating financial risk, computing 
multivariate probabilities, statistical physics, and uncertainty quantification.  

We have developed and implemented quasi-Monte Carlo (qMC) cubature algorithms that 
adaptively determine the sample size needed to guarantee that an error tolerance is met 
provided that the integrand belongs to a cone $\calC$ of well-behaved functions
\cite{ChoEtal15a, HicJim16a, Jim16a, JimHic16a}.  That is, given a low discrepancy 
sequence 
$\bsx_0, \bsx_1, \ldots $ and function data $f(\bsx_0), f(\bsx_1), \ldots$, we have a 
stopping rule based on the 
function data obtained so far that chooses $n$ for which
\begin{equation} \label{FTD:eq:cubprob}
\abs{\mu - \widehat{\mu}_n} \le \varepsilon, \qquad \text{where } 
\widehat{\mu}_n = \frac 1n \sum_{i=0}^{n-1} f(\bsx_i), \quad f \in \mathcal{C}.
\end{equation}
Here, $ \widehat{\mu}_n$ is the sample average of function values taken at 
well-chosen points whose empirical distribution mimics the uniform distribution.  The 
cone $\calC$ contains integrands whose Fourier coefficients decay in a reasonable 
manner, thus allowing the stopping rule to succeed.  Specifically, the size of the high 
wavenumber components of an integrand in $\calC$ cannot be large in comparison to 
the 
size of 
the low wavenumber components.  Rather 
than choosing the $\bsx_i$ to be independent and identically distributed (IID)
$\mathcal{U}[0,1)^d$ 
points, we use shifted digital sequences \cite{DicPil10a, Nie92} and sequences of 
nodesets of shifted rank-$1$ lattices \cite{HicEtal00, Mai81a, MaiSepSpa10a, 
SloJoe94}.  Sequences that are more evenly distributed than IID points are the hallmark 
of qMC algorithms.

Traditional qMC error analysis leads to error bounds of the form \cite{DicEtal14a, Hic97a}
\begin{equation*}
\abs{\mu - \widehat{\mu}_n} \le D(\{\bsx_i\}_{i=0}^{n-1}) \lVert f \rVert,
\end{equation*}
where  the integrand, $f$, is assumed to lie in some Banach space with (semi-)norm 
$\lVert \cdot \rVert$, and $\lVert f \rVert$ is often called the \emph{variation} of $f$.  
Moreover, the \emph{discrepancy} $D(\cdot)$ is a measure of quality of 
the sample, 
$\{\bsx_i\}_{i=0}^{n-1}$.  For 
integrands lying in the ball $\calB : = \{f : \lVert f \rVert \le \sigma\}$ one may construct a 
\emph{non-adaptive} algorithm 
guaranteeing $\abs{\mu - \widehat{\mu}_n} \le \varepsilon$ by choosing $n = \min \bigl 
\{n' \in \N: 
D(\{\bsx_i\}_{i=0}^{n'-1})  \le \varepsilon/\sigma \bigr \}$.

Our interest is in \emph{adaptive} qMC algorithms, where $n$ depends on the the 
function 
data observed.  Several heuristics have been proposed for choosing $n$:
\begin{description}
	\item [\textbf{Independent and identically distributed (IID) replications.} 
	\cite{Owe99a}] Compute 
	\[
	\widehat{\mu}_{n,R} = \frac 1R 
	\sum_{r=1}^R \widehat{\mu}^{(r)}_n , \qquad \widehat{\mu}_n^{(r)} = \frac 1n 
	\sum_{i=0}^{n-1} 
	f(\bsx_i^{(r)}), \quad r = 1, \ldots, R,
	\]
	where $\bigl \{\bsx_i^{(1)} \bigr \}_{i=0}^{\infty}, \ldots, \bigl \{\bsx_i^{(R)} 
	\bigr \}_{i=0}^{\infty}$ 
	are	IID randomizations of a low discrepancy 
	sequence, and $\bbE\bigl(\widehat{\mu}^{(r)}_n \bigr) = \mu$.  The standard deviation 
	of these 
	$\widehat{\mu}^{(r)}_n$, 
	perhaps 
	multiplied by an inflation vector is proposed as an upper bound for $ \lvert \mu - 
	\widehat{\mu}_{n,R}\rvert$.
	
	\item [\textbf{Internal replications.} \cite{Owe99a}] Compute 
	\[
\widehat{\mu}_{nR} = \frac 1{R} 
	\sum_{r=1}^R \widehat{\mu}^{(r)}_n =  \frac 1{nR} 
	\sum_{r=1}^{nR }f(\bsx_i), \qquad 	\widehat{\mu}_n^{(r)} = \frac 1n 
	\sum_{i=(r-1)n}^{rn-1} 
	f(\bsx_i), \quad  r = 1, \ldots, R.
	\]
	The standard deviation of these $\widehat{\mu}^{(r)}_n$, perhaps 
	multiplied by an inflation vector is proposed as an upper bound for $ \lvert \mu - 
	\widehat{\mu}_{nR}\rvert$.
	
	\item[\textbf{Quasi-standard error.} \cite{Hal05a}] Compute 
	\[
	\widehat{\mu}_{n,R} = \frac 1{R} 
	\sum_{r=1}^R \widehat{\mu}^{(r)}_n , \qquad 	\widehat{\mu}_n^{(r)} = \frac 1n 
	\sum_{i=0}^{n-1} 
	f(\bsx_{i, (r-1)d+1 : rd }), \quad  r = 1, \ldots, R,
	\]
	where $\{\bsx_i\}_{i=0}^\infty$ is now an $Rd$ dimensional sequence, and $\bsx_{i, 
	(r-1)d+1 : rd }$ denotes the $(r-1)d+1^{\text{st}}$ through $rd^{\text{th}}$ components 
	of the $i^{\text{th}}$ point in the sequence.  The standard deviation of these 
	$\widehat{\mu}^{(r)}_n$, perhaps 
	multiplied by an inflation vector is proposed as an upper bound for $ \lvert \mu - 
	\widehat{\mu}_{n,R}\rvert$.  However, see \cite{Owe06a} for cautions 
	regarding 
	this method.
\end{description}

None of the above methods have theoretical justification.  Since the proposed 
error bounds are homogeneous, it is clear that the sets of integrands for which 
these error bounds are correct are \emph{cones}.  That is, if one of the above error 
bounds above is correct for integrand $f$, it is also correct for integrand $cf$, where 
$c$ is an arbitrary constant.   Unfortunately, there is no theorem 
defining a cone $\calC$  for which any of the above error bounds  must succeed.

In this article we review our recent work developing adaptive qMC algorithms satisfying 
\eqref{FTD:eq:cubprob}.  We describe the cones $\calC$ for which our algorithms 
succeed.  We also extend our earlier algorithms in two 
directions:
\begin{itemize}
	\item Meeting more general error criteria than simply absolute error, and
	\item Using control variates to improve efficiency.
\end{itemize}
Our data-based cubature error bounds are  described in Sec.\ \ref{FTD:sec:ErrEst}.   
This section also emphasizes the similar 
algebraic structures of our two families of qMC sequences.  In Sec.\  
\ref{FTD:sec:GeneralError}, we describe how 
our error bounds can be used to satisfy error criteria that are more general 
than that in \eqref{FTD:eq:cubprob}.  Sec.\ \ref{FTD:sec:numerical} describes the 
implementation of our new adaptive qMC algorithms and provides numerical examples.  
Control variates with adaptive qMC cubature is described in Sec.\ \ref{FTD:sec:CV}.  
We conclude with a 
discussion that identifies problems for further research.

\section{Error Estimation for Digital Net and Lattice Cubature} \label{FTD:sec:ErrEst}

Here we summarize some of the key properties of cubature based on digital 
sequences and rank-$1$ lattice node sequences.  We use a 
common notation for both cases to highlight the similarities in analysis. We focus on the 
base $2$ setting for simplicity and because it is most common in practice.  Moreover, $n 
= 
2^m$ for non-negative integer $m$.  See 
\cite{HicJim16a} and \cite{JimHic16a} for more details.    

Let $\{\bszero = \bsz_0, \bsz_1, \ldots\}$ be a sequence of distinct points that is either a 
digital sequence or a rank-$1$ lattice node sequence.   Let $\oplus : [0,1)^d \times 
[0,1)^d \to [0,1)^d$ denote an addition operator under which the sequence is a group and 
the first $2^m$ points form a subgroup.  For some shift, $\bsDelta \in [0,1)^d$, 
the data sites used for cubature in 
\eqref{FTD:eq:cubprob} are given by 
$\bsx_i = \bsz_i \oplus \bsDelta$ for all $i \in \N_0$.
Typical examples of a digital sequence and a rank-$1$ lattice node sequence are given in 
Fig.\ \ref{FTD:fig:lowdiscpts}.

\begin{figure}
	\centering
	\includegraphics[width = 5.3cm]{\FTDFigDirectory/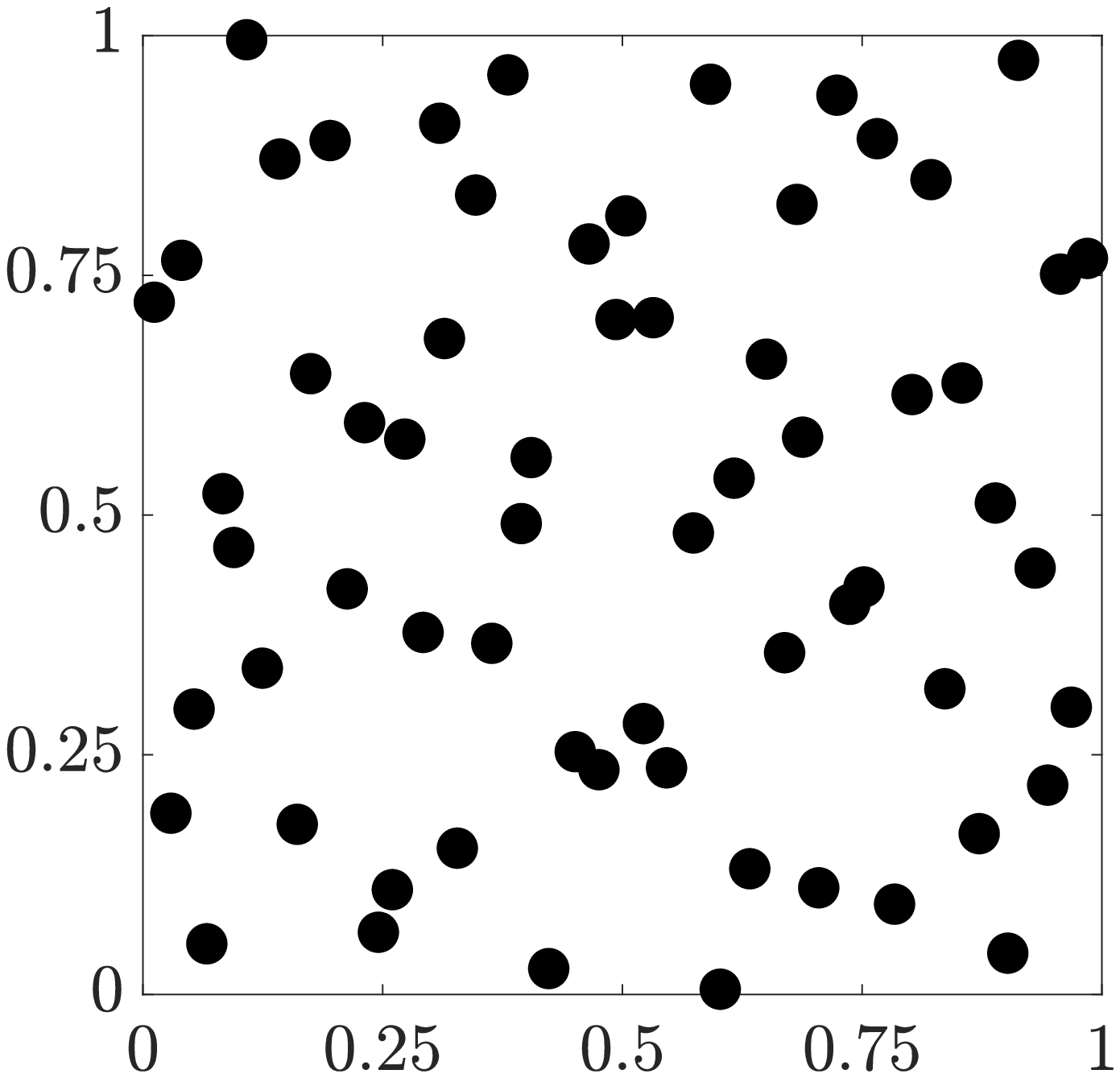} \qquad
	\includegraphics[width = 5.3cm]{\FTDFigDirectory/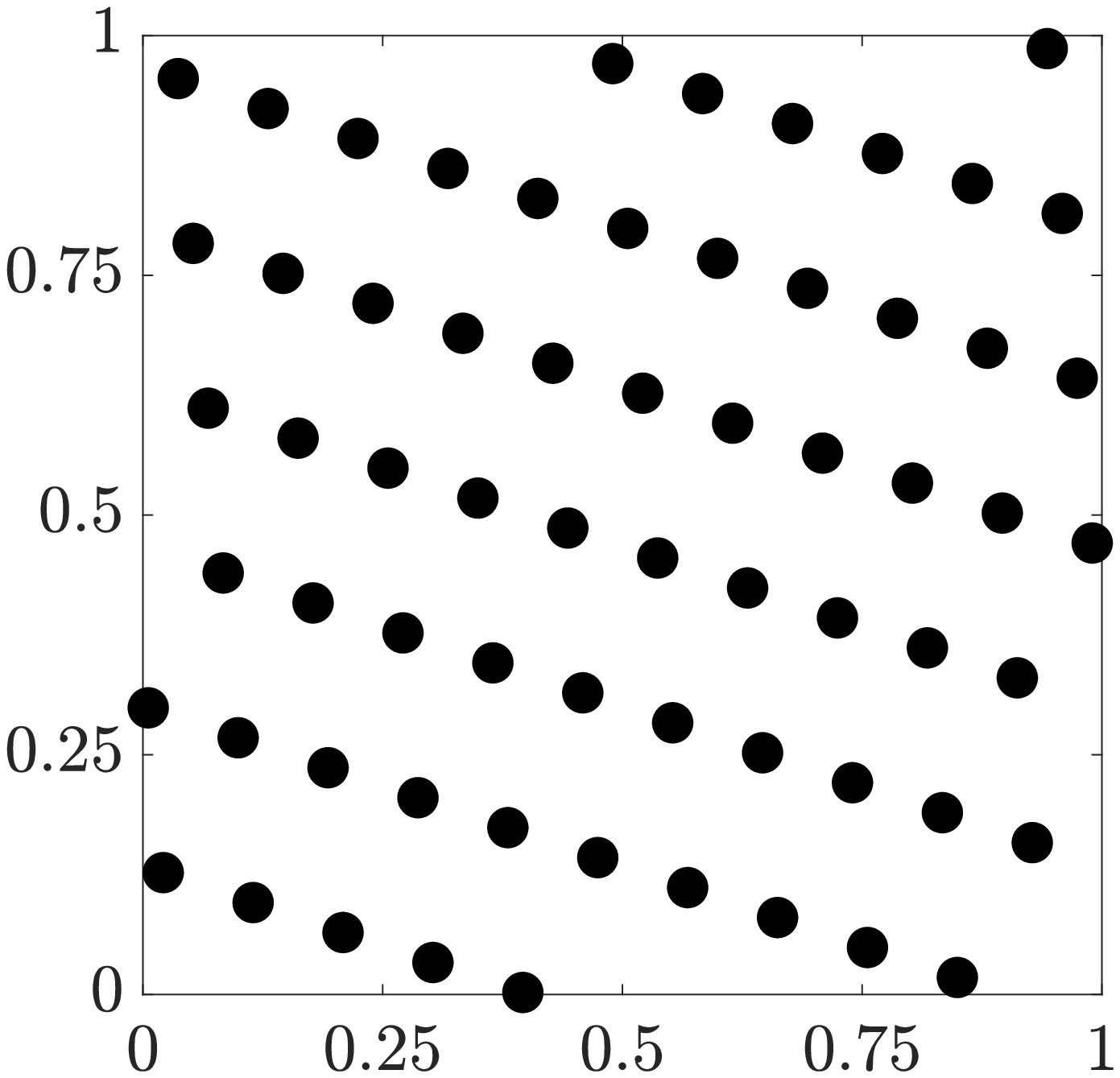}
	\caption{Two dimensional projections of a digitally shifted and Owen scrambled digital 
	sequence (left) and a 
	shifted rank-1 lattice node set (right). \label{FTD:fig:lowdiscpts}}
\end{figure}

There is a 
set of integer vector wavenumbers, $\bbK$, which is a group under its own  addition 
operator, also denoted $\oplus$.  There is also a
a bilinear functional, $\langle\cdot,\cdot \rangle: \bbK \times [0,1)^d \to \R$, which is 
used to to define a  Fourier basis for $L^2[0,1)^d$, given by $\bigl\{\E^{2 \pi \sqrt{-1} 
\langle\bsk, \cdot 	\rangle} \bigr\}_{\bsk \in \bbK}$.  The integrand is expressed as a 
Fourier series, 
\begin{multline}
f(\bsx) = \sum_{\bsk \in \bbK} \hat{f}(\bsk) \E^{2 \pi \sqrt{-1} \langle\bsk, \bsx \rangle} 
\quad  
\forall \bsx \in [0,1)^d, \ f \in L^2[0,1)^d, \nonumber \\
\text{where } \hat{f}(\bsk) : = \int_{[0,1)^d} f(\bsx)
\E^{-2 \pi \sqrt{-1} \langle\bsk, \bsx \rangle}\, \D \bsx. \label{FTD:eq:FourierDecomp}
\end{multline}
Since we require function values for cubature, we assume throughout that this Fourier 
series is absolutely convergent, i.e., $ \sum_{\bsk \in \bbK} \lvert \hat{f}(\bsk) \rvert < 
\infty$.

In the case of digital sequences, $\oplus$ denotes digit-wise addition modulo $2$ for 
points in $[0,1)^d$ and wavenumbers in $\bbK = \N_0^d$. The digits of  
$\bsz_1, \bsz_2, \bsz_4, \bsz_8, \ldots$ correspond to 
elements in the generator matrices for the usual method for constructing digital 
sequences \cite[Sec.\ 4.4]{DicPil10a}.  Also,
$\langle \bsk, \bsx 
\rangle$ is one half of an $\ell^2$ inner product of the digits of $\bsk$ and 
$\bsx$ modulo $2$. The 
$\E^{2 \pi \sqrt{-1} \langle \bsk, \cdot \rangle}$ are multivariate Walsh functions (see 
Fig.\ \ref{FTD:fig:WalshFun}).

\begin{figure}
	\centering
	\begin{tabular}{p{3cm}@{\qquad}p{3cm}@{\qquad}p{3cm}}
		\vspace{-14.75ex}\includegraphics[width=3cm]{\FTDFigDirectory/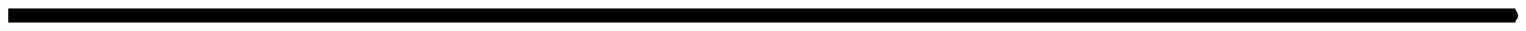} &
		\includegraphics[width=3cm]{\FTDFigDirectory/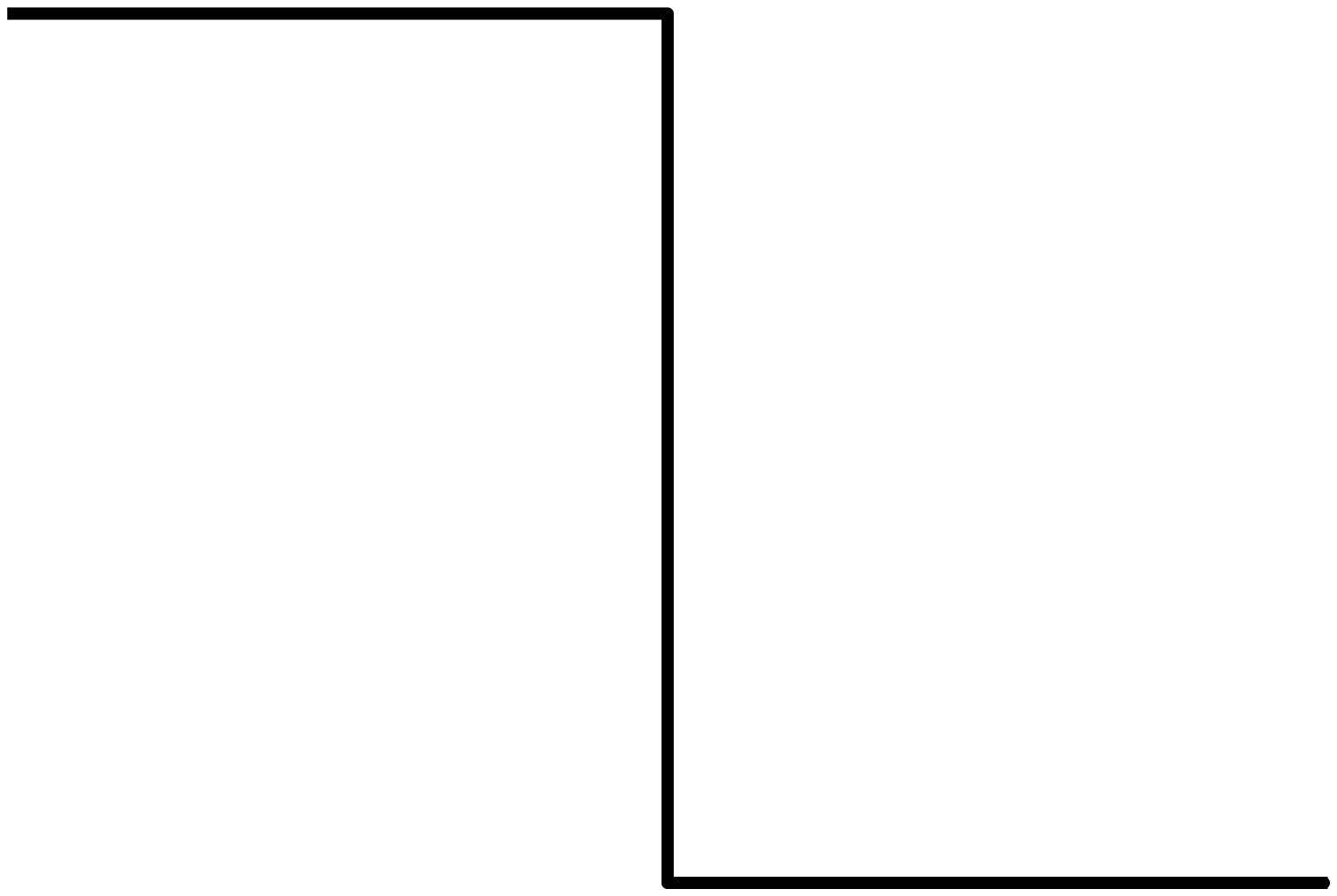} &
		\includegraphics[width=3cm]{\FTDFigDirectory/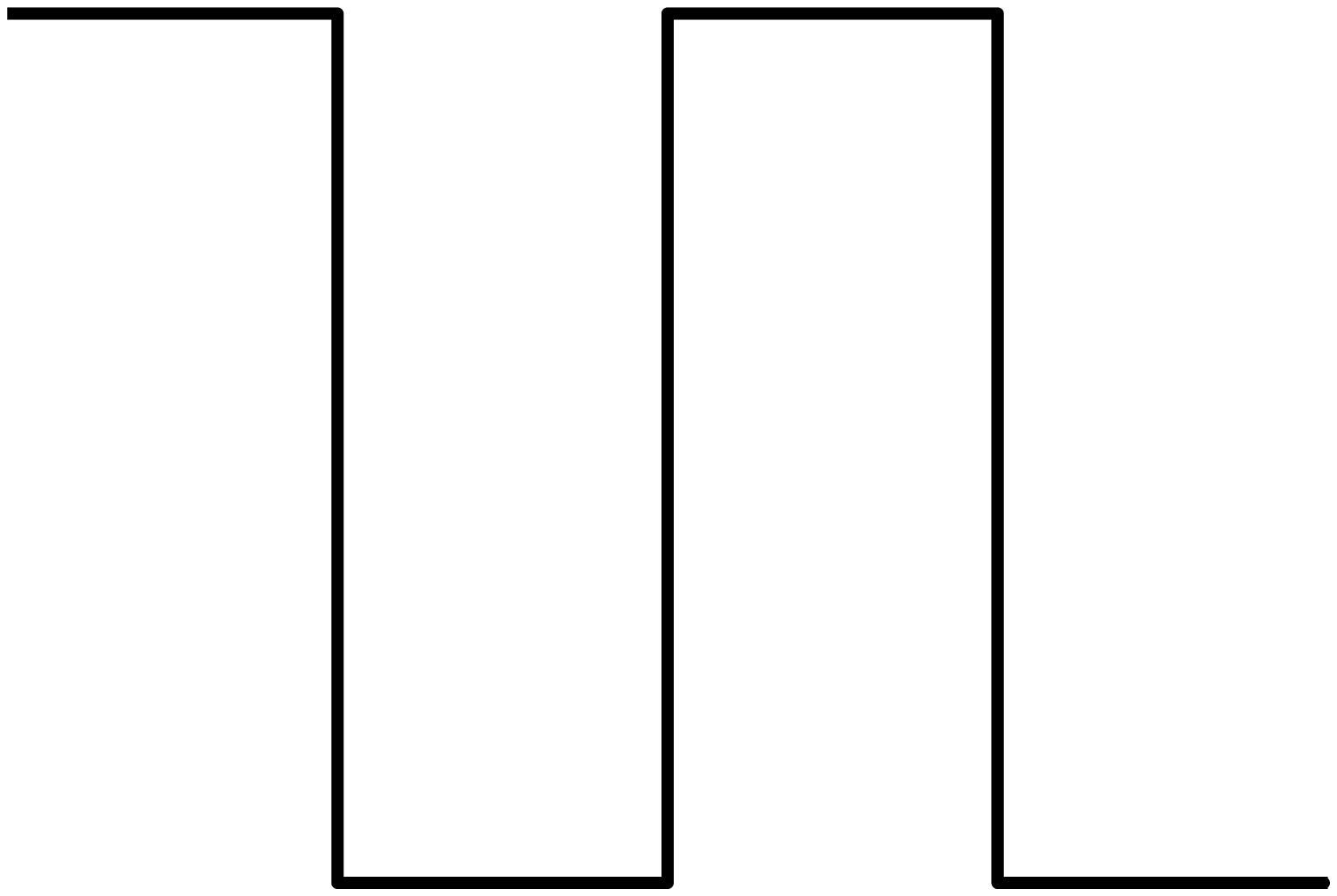} 
		\tabularnewline[2ex]
		\includegraphics[width=3cm]{\FTDFigDirectory/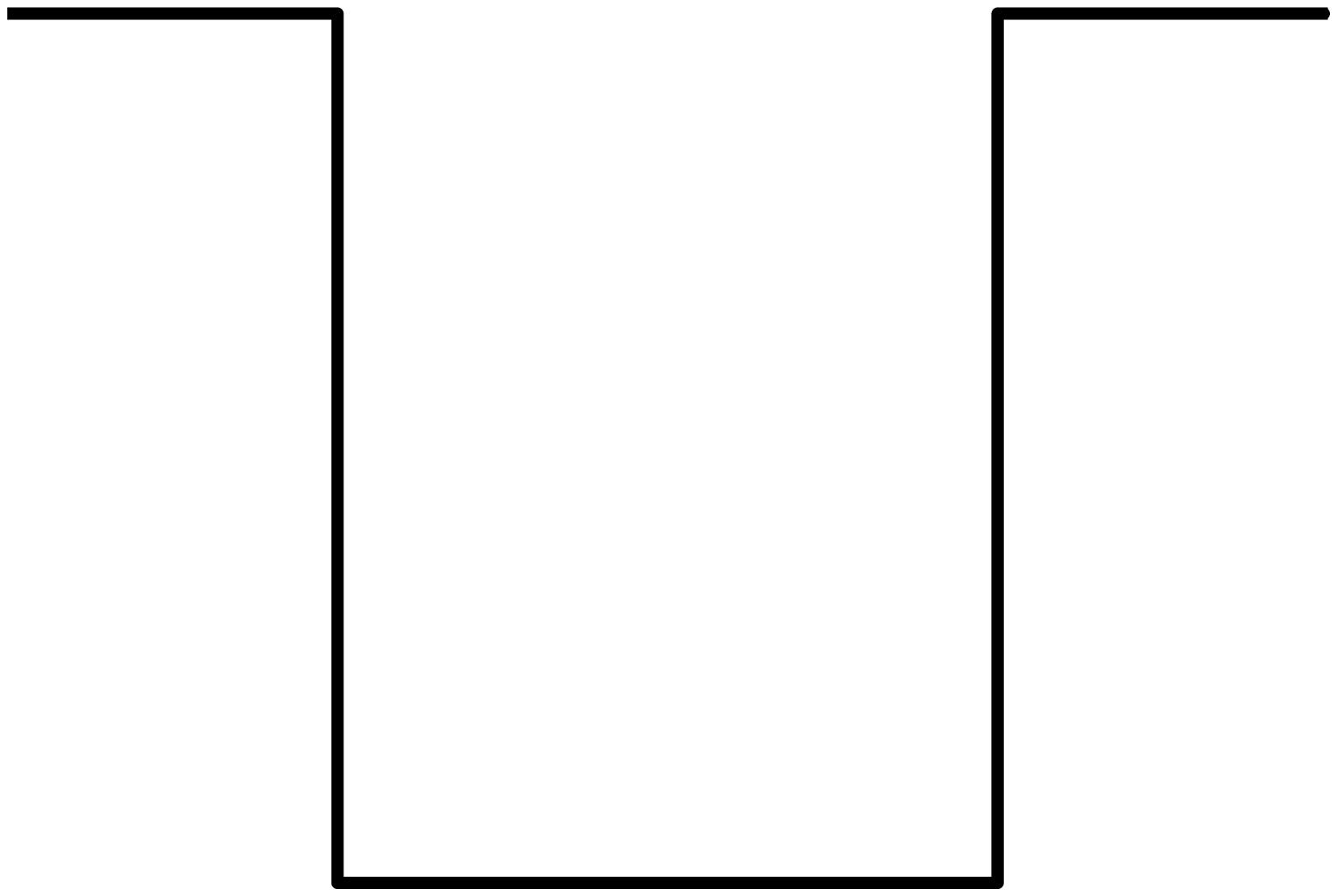} &
\includegraphics[width=3cm]{\FTDFigDirectory/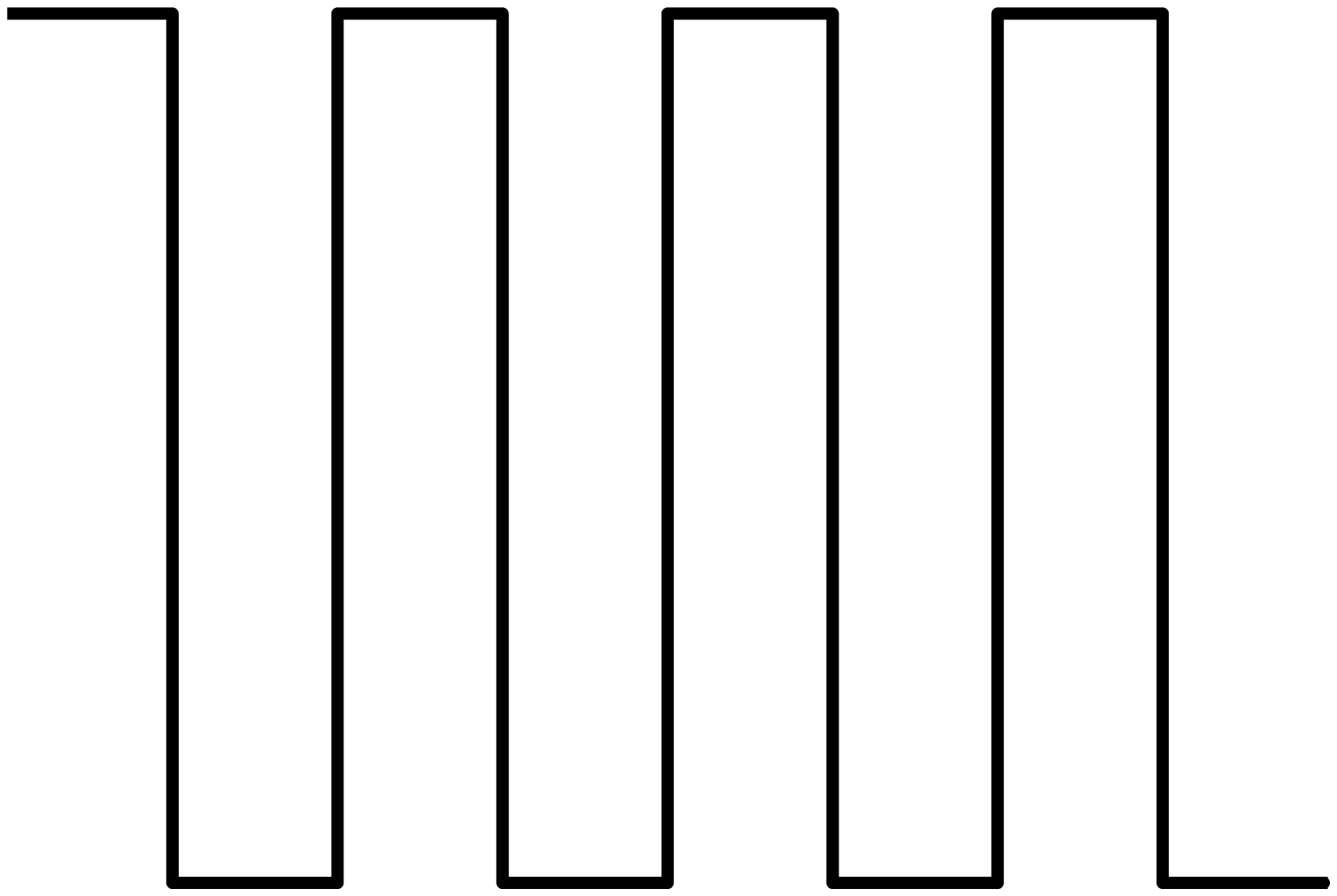} &
\includegraphics[width=3cm]{\FTDFigDirectory/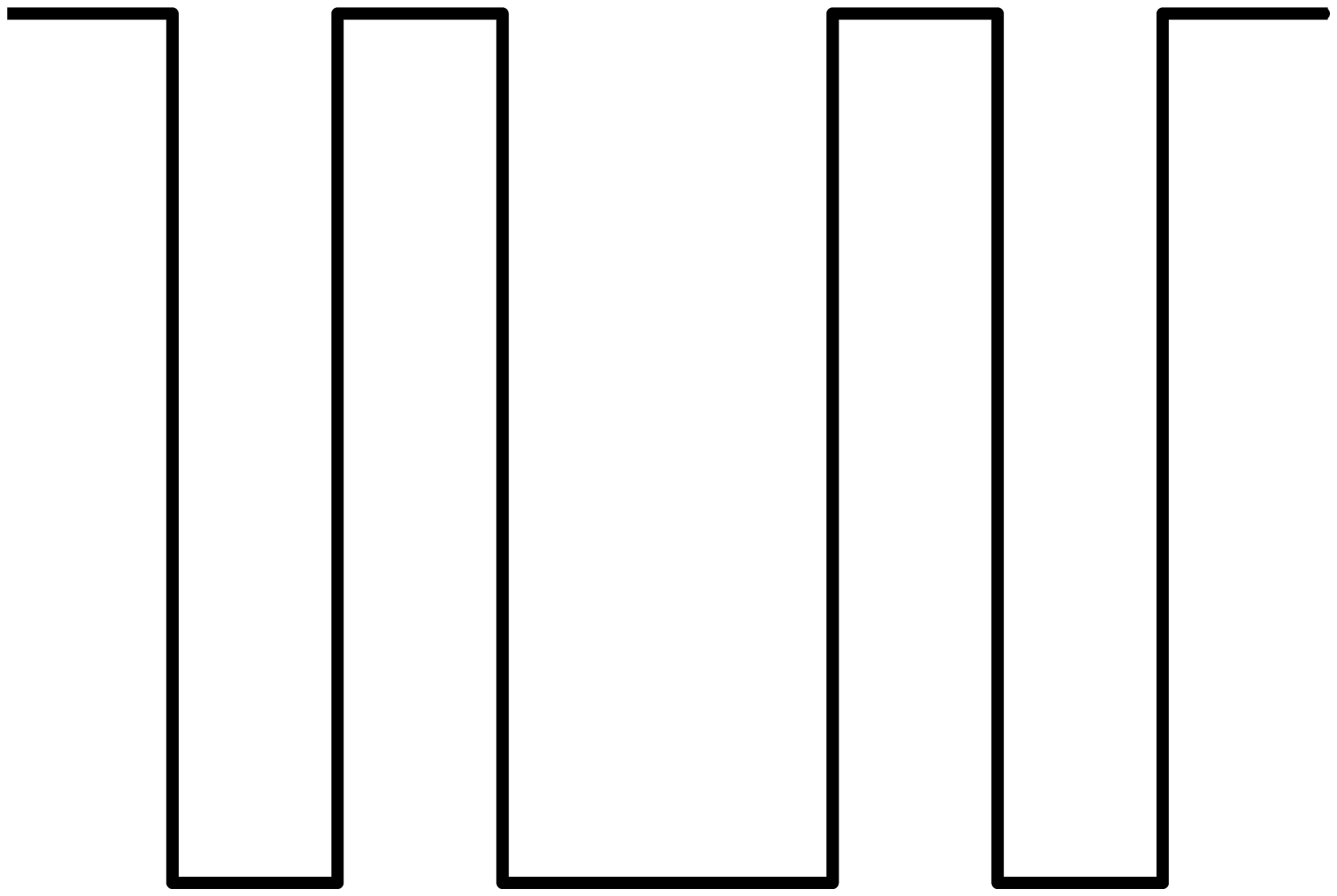}		
	\end{tabular}
	 \qquad
	\caption{One-dimensional Walsh functions corresponding to $k = 0, 1, 2,3, 4$, and 
	$5$.
	\label{FTD:fig:WalshFun}}
\end{figure}

In the case of rank-1 lattice node sequences, $\oplus$ denotes addition modulo 
$\bsone$ for  points in 
$[0,1)^d$ and ordinary addition for wavenumbers in $\bbK = \bbZ^d$.  Moreover, 
$\langle 
\bsk, \bsx \rangle = \bsk^T \bsx \bmod 1$. The 
$\E^{2 \pi \sqrt{-1} \langle \bsk, \cdot \rangle}$ are multivariate complex exponential 
functions.

The \emph{dual set} corresponding to the first $n=2^m$ unshifted points, $\{ \bsz_0, 
\ldots, 
\bsz_{2^m-1}\}$,  is 
denoted $\bbK_m$ and defined as
\begin{equation*}
\bbK_0 := \bbK, \qquad \bbK_m : =  \{\bsk \in \bbK :  \langle\bsk, \bsz_{2^\ell}\rangle = 0 
\text{ 	for all } \ell = 0, \ldots, m-1  \}, \quad m \in \N.
\end{equation*}
The dual set satisfies
\begin{equation*}
\frac{1}{2^m}\sum_{i=0}^{2^m-1} \E^{2 \pi \sqrt{-1} \langle\bsk, \bsz_i \rangle} =  
\begin{cases} 1, 
& 
\bsk \in \bbK_m, \\ 0, & \text{otherwise}. 
\end{cases}
\label{FTD:eq:dual_av_basis}
\end{equation*}

The \emph{discrete Fourier transform} of a function $f$ using $n = 2^m$ data is 
denoted 
$\tilde{f}_m$ 
and defined as 
\begin{align}
\nonumber 
\tilde{f}_m(\bsk) &: =  \frac{1}{2^m}\sum_{i=0}^{2^m-1}f(\bsx_i) \E^{-2 \pi \sqrt{-1} 
	\langle\bsk, \bsx_i \rangle} \\ 
& = \hat{f}(\bsk) + \sum_{\bsl \in \bbK_m \setminus \{\bszero\}} \hat{f}(\bsk \oplus \bsl) 
\E^{2 \pi 
	\sqrt{-1} 
	\langle\bsl, \bsDelta \rangle},   \label{FTD:eq:aliasing} 
\end{align}
after applying some of the properties alluded to above.  This last expression illustrates 
how the discrete Fourier coefficient $\tilde{f}_m(\bsk)$ 
differs from its true 
counterpart, $\hat{f}(\bsk)$, by the aliasing terms, which involve the other
wavenumbers in the coset $\bsk \oplus \bbK_m$.  As $m$ increases, 
wavenumbers leave $\bbK_m$, 
and so the 
aliasing decreases.

The sample mean of the function data is the $\bsk = \bszero$ discrete
Fourier coefficient:
\[
\widehat{\mu}_n = \frac{1}{2^m}\sum_{i=0}^{2^m-1}f(\bsx_i) = \tilde{f}_m(\bszero) = 
\sum_{\bsl \in \bbK_m} \hat{f}(\bsl)\E^{2 \pi \sqrt{-1} \langle\bsl, \bsDelta \rangle}.
\]
Hence, an error bound for the sample mean may be expressed in terms of those 
Fourier coefficients corresponding to wavenumbers in the dual set:
\begin{equation} \label{FTD:eq:error_as_Fourier}
\abs{\mu - \widehat{\mu}_n} = \abs{\hat{f}(\bszero) - \tilde{f}_m(\bszero)} = 
\abs{\sum_{\bsl \in 
		\bbK_m\setminus\{\bszero\}} \hat{f}(\bsl)\E^{2 \pi \sqrt{-1} \langle\bsl, \bsDelta 
		\rangle}} 
\leq 
\sum_{\bsl \in \bbK_m\setminus\{\bszero\}} \abs{\hat{f}(\bsl)}. 
\end{equation}

Our aim is to bound the right hand side of this cubature error bound in terms of function 
data or more specifically, in terms of the discrete Fourier transform. However,  this 
requires that the true Fourier coefficients of the integrand do not decay too erratically.  
This motivates our definition of $\calC$, the cone of integrands for which our adaptive 
algorithms succeed.  

To facilitate the definition of $\calC$
we construct an ordering of the wavenumbers, $\tilde{\bsk}: 
\N_0 \to \mathbb{K}$ satisfying $\tilde{\bsk}(0) = \bszero$ and 
$\bigl \{\tilde{\bsk}(\kappa + \lambda 2^m) \bigr\}_{\lambda =0}^{\infty}= 
\tilde{\bsk}(\kappa) \oplus 
\bbK_m$ for $\kappa  = 0, \ldots, 2^{m}-1$ and $m \in \N_0$, as described in 
\cite{HicJim16a,JimHic16a}.  This condition implies the crucial fact that 
$\abs{\tilde{f}_m(\tilde{\bsk}(\kappa + \lambda 
2^m))}$ is the same for all $\lambda \in \N_0$.  Although there is some arbitrariness in 
this ordering, it is understood that $\tilde{\bsk}(\kappa)$ generally increases in 
magnitude as 
$\kappa$ tends to infinity.  We adopt the shorthand notation 
$\hat{f}_{\kappa}:=\hat{f}(\tilde{\bsk}(\kappa))$ and 
$\tilde{f}_{m,\kappa}:=\tilde{f}_m(\tilde{\bsk}(\kappa))$.  
Then, the
error bound in  \eqref{FTD:eq:error_as_Fourier} may be written as 
\begin{equation} \label{FTD:eq:error_as_Fourier_kappa}
\abs{\mu - \widehat{\mu}_n} 
		\leq 
\sum_{\lambda = 1}^{\infty} \abs{\hat{f}_{\lambda 2^m}}. 
\end{equation}

The cone of functions whose Fourier series are absolutely convergent and whose true 
Fourier coefficients, $\hat{f}_\kappa$, decay steadily as $\kappa$ tends to 
infinity is
\begin{subequations} \label{FTD:eq:cone_def}
\begin{multline}
\mathcal{C}=\{f \in AC([0,1)^d) : \widehat{S}_{\ell,m}(f) \le \widehat{\omega}(m-\ell) 
\widecheck{S}_m(f),\ \ 
\ell \le 
m, \\
\widecheck{S}_m(f) \le \mathring{\omega}(m-\ell) S_{\ell}(f),\ \  \ell_* \le \ell \le m\},
\end{multline}
where
\begin{gather}
\label{FTD:eq:SShatdef}
S_m(f) :=  \sum_{\kappa=\left \lfloor 2^{m-1} \right \rfloor}^{2^{m}-1} 
\abs{\hat{f}_{\kappa}}, 
\qquad 
\widehat{S}_{\ell,m}(f)  := \sum_{\kappa=\left \lfloor 2^{\ell-1} \right \rfloor}^{2^{\ell}-1} 
\sum_{\lambda=1}^{\infty} \abs{ \hat{f}_{\kappa+\lambda 2^{m}}}, \\
\label{FTD:eq:Scheckdef}
\widecheck{S}_m(f):=\widehat{S}_{0,m}(f) + \cdots + \widehat{S}_{m,m}(f)=
\sum_{\kappa=2^{m}}^{\infty} \abs{\hat{f}_{\kappa}},
\end{gather}
\end{subequations}
and where $\ell, m \in \N_0$ and $\ell \le m$.  The positive integer $\ell_*$ and the 
bounded 
functions $\widehat{\omega}, \mathring{\omega}: 
\N_0 \to [0,\infty)$ are parameters that determine how inclusive $\calC$ is and  how 
robust our algorithm is.  Moreover, $\mathring{\omega} (m) \to 0$ as $m \to \infty$.  The 
default values are provided in Sec.\ 
\ref{FTD:sec:numerical}.

We now explain the definition of the cone $\calC$ and the data driven cubature error 
bound that we are able to derive.  For illustration we use the functions depicted in 
Fig.\ \ref{FTD:fig:inoutconef}.  The one on the left lies inside $\calC$ because its 
Fourier coefficients decay steadily (but not necessarily monotonically), while the one on 
the right lies outside $\calC$ because its Fourier coefficients decay erratically.  The 
function lying outside $\calC$ resembles the one lying inside $\calC$ but with high 
wavenumber noise.

The sum of the absolute value of the Fourier coefficients appearing on the 
right side of error bound \eqref{FTD:eq:error_as_Fourier_kappa} is $\widehat{S}_{0,m}(f)$ 
according to the definition in \eqref{FTD:eq:SShatdef}.  In Fig.\  
\ref{FTD:fig:inoutconef}, $m = 11$, and $\widehat{S}_{0,11}(f)$ corresponds to the sum 
of $\lvert \hat{f}_\kappa \rvert$ for $\kappa = 2~048, 4~096, 6~144, \ldots $.  Since only $n 
= 2^m$ 
function 
values are available, it is impossible to estimate the Fourier coefficients appearing in 
$\widehat{S}_{0,m}(f)$ directly by discrete Fourier coefficients.  

By the 
definition in 
\eqref{FTD:eq:Scheckdef}, it follows that $\widehat{S}_{0,m}(f) \le 
\widecheck{S}_m(f)$.   In Fig.\  
\ref{FTD:fig:inoutconef}, $\widecheck{S}_{11}(f)$ corresponds to the sum 
of all $\lvert \hat{f}_\kappa \rvert$ with $\kappa \ge 2048$.
The definition of $\calC$ assumes that $\widehat{S}_{0,m}(f) \le \widehat{\omega}(m) 
\widecheck{S}_m(f)$, where 
$\widehat{\omega}(m)$ could be chosen as $1$ or could decay with $m$.  This is up to 
the user.  

\begin{figure}
	\centering
	\begin{tabular}{p{5.3cm}@{\qquad}p{5.3cm}}
		\includegraphics[width=5.3cm]{\FTDFigDirectory/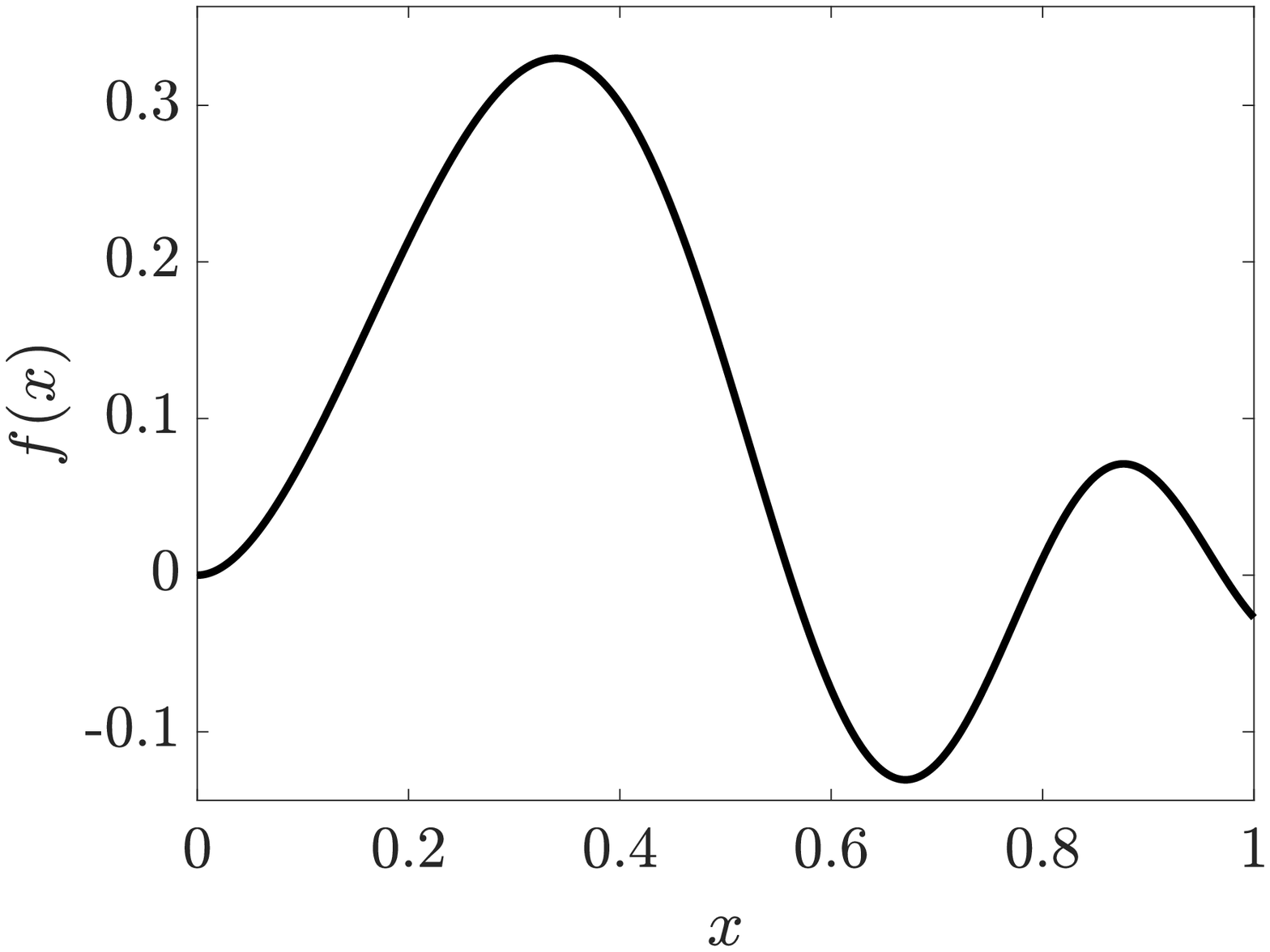}
		&			
		\includegraphics[width=5.3cm]{\FTDFigDirectory/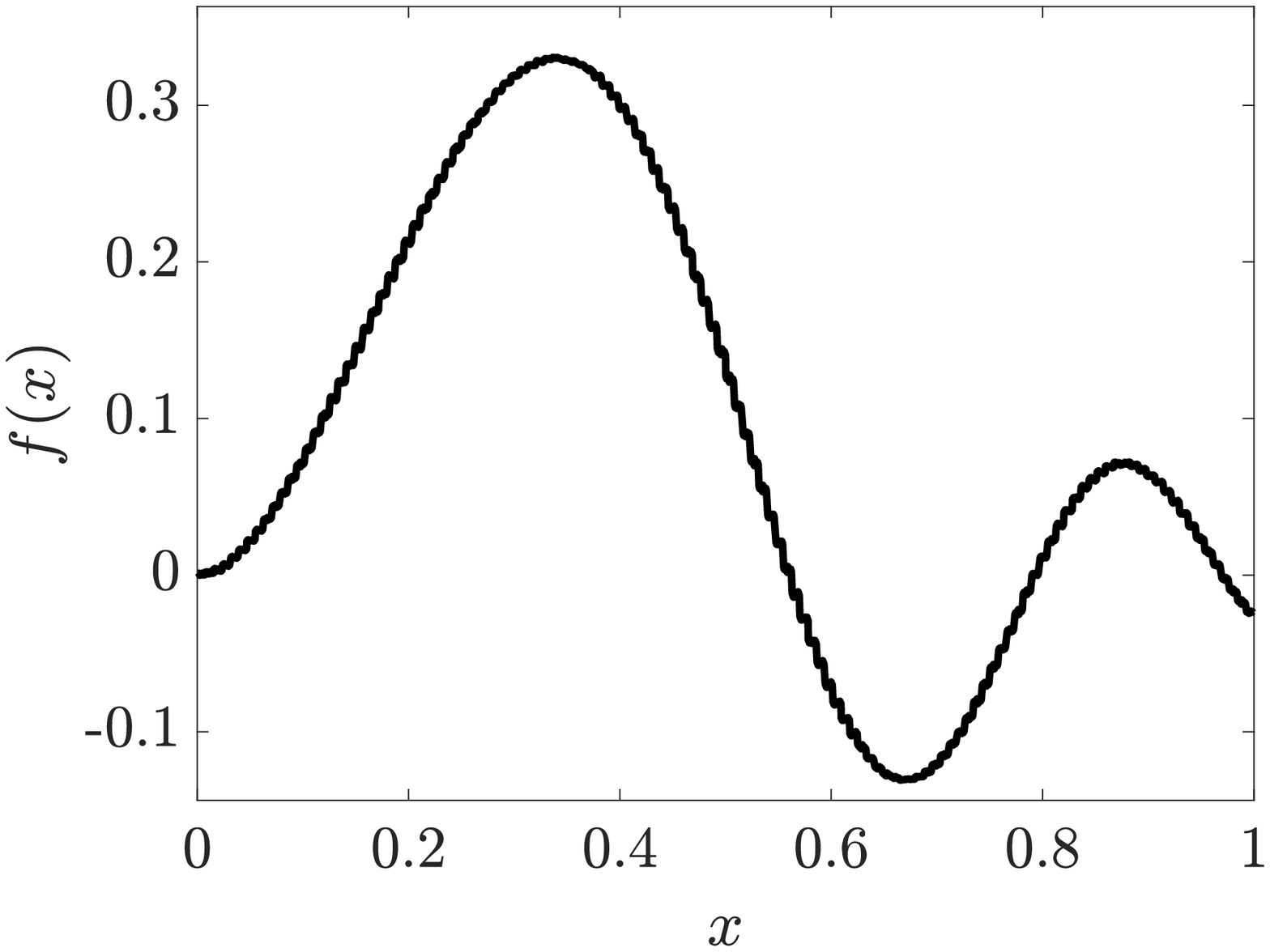}
		\tabularnewline
		\includegraphics[width=5.3cm]{\FTDFigDirectory/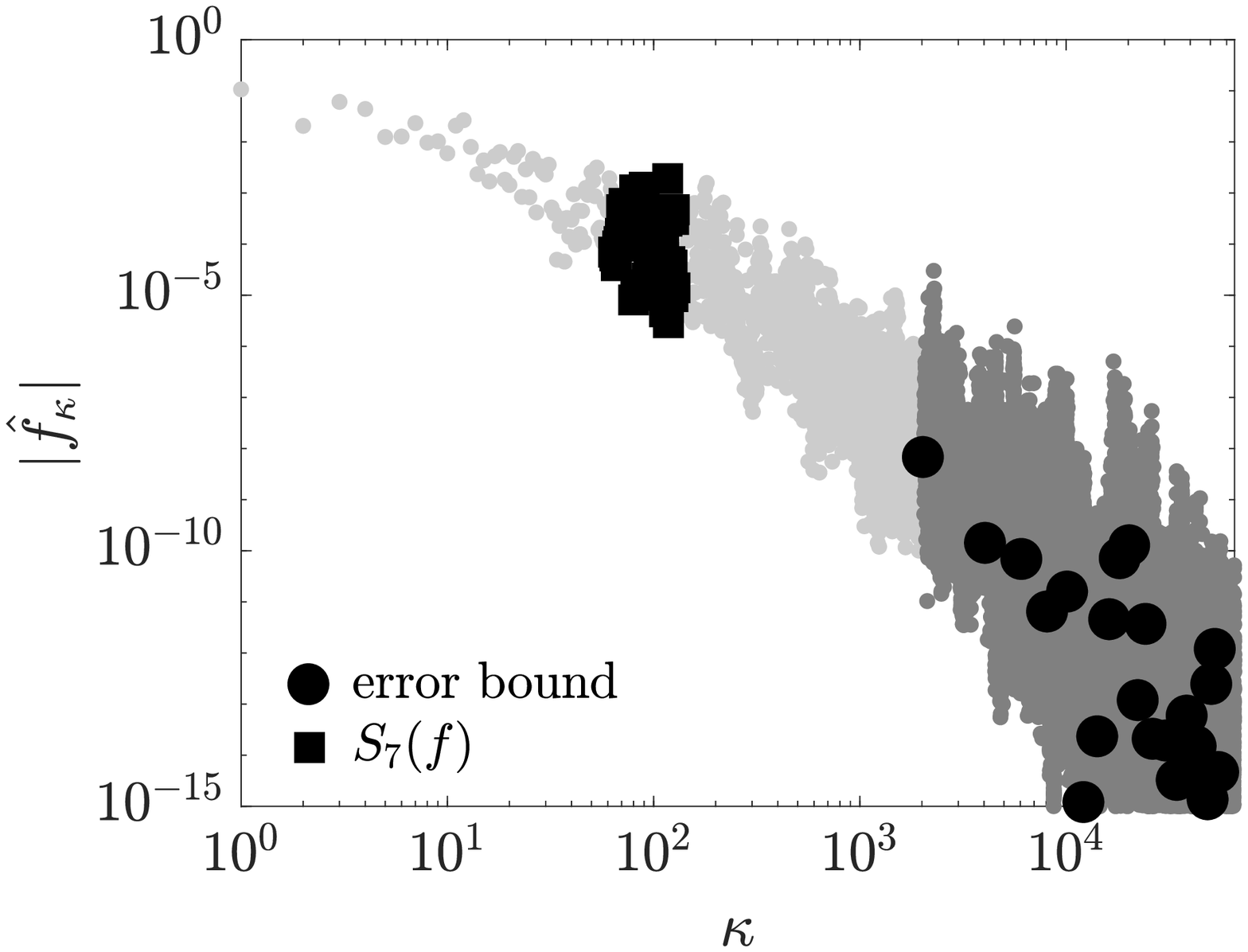}
		&			
		\includegraphics[width=5.3cm]{\FTDFigDirectory/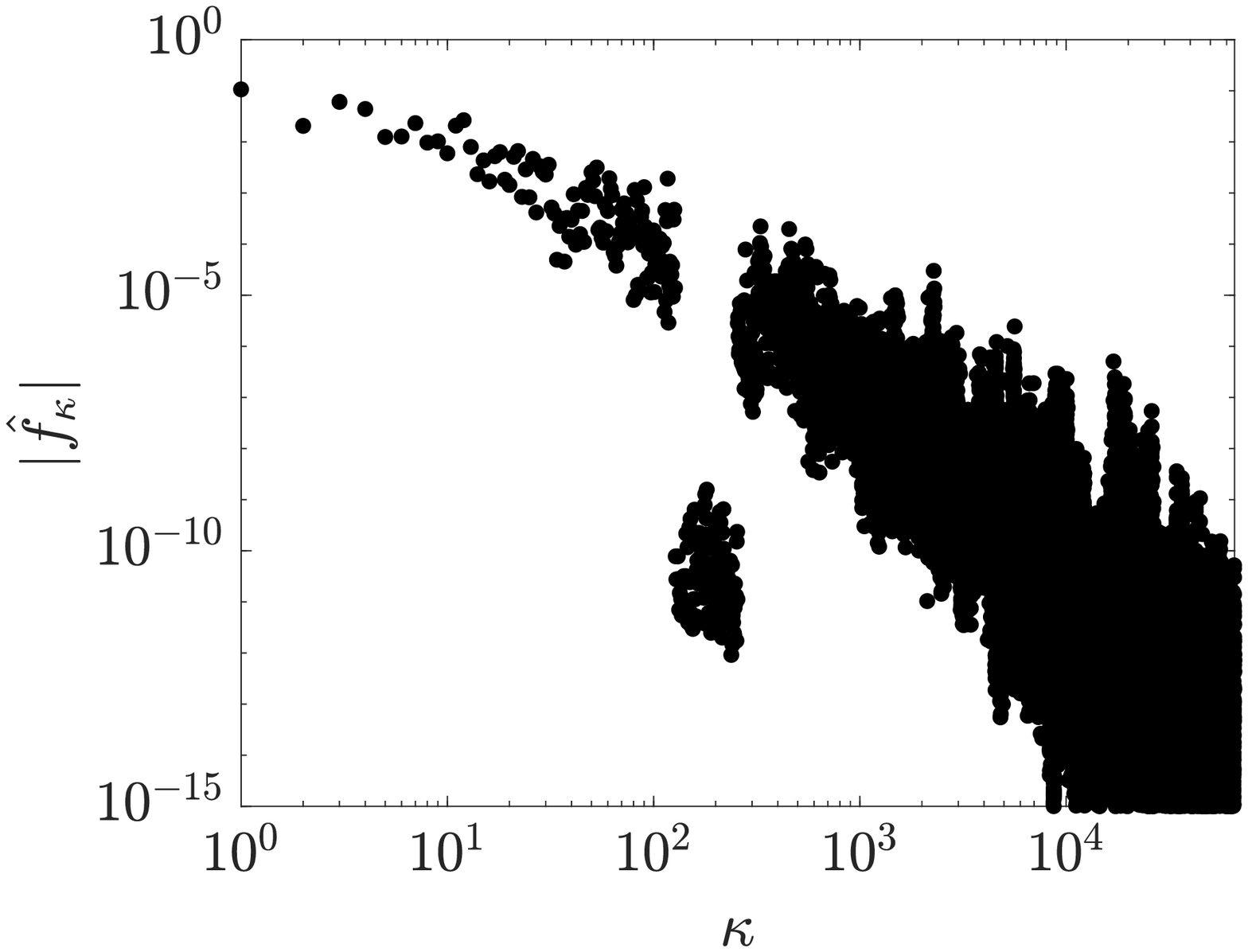}
	\end{tabular}
	\caption{A typical function lying inside $\calC$ and its Fourier Walsh coefficients (left) 
	in 
		contrast to a function lying outside $\calC$ and its Fourier Walsh coefficients (right).
		\label{FTD:fig:inoutconef}}
\end{figure}

Still, $ \widecheck{S}_m(f)$ involves Fourier coefficients that are of too high a 
wavenumber to 
be approximated by discrete Fourier coefficients.  The definition of $\calC$ also assumes 
that $\widecheck{S}_{m}(f) \le \mathring{\omega}(r) S_{m-r}(f)$ for any non-negative $r 
\le m -  
\ell_*$.  
This means that the infinite sum of the high wavenumber coefficients, 
$\widecheck{S}_{m}(f)$ 
cannot 
exceed some factor, $\mathring{\omega}(r)$, of the finite sum of modest wavenumber 
coefficients 
$S_{m-r}(f)$.  In Fig.\ \ref{FTD:fig:inoutconef}, $r = 4$, and the graph on the left shows 
$\widecheck{S}_{11}(f)$ to be bounded 
above by $\mathring{\omega}(4) S_{7}(f)$ for a modest value of 
$\mathring{\omega}(4)$.  Recall from 
the definition in \eqref{FTD:eq:SShatdef} 
that $ S_{7}(f)$ is the sum of the absolute value of the Fourier coefficients corresponding 
to $64, \ldots, 127$.  However, the function depicted in the right of Fig.\ 
\ref{FTD:fig:inoutconef} violates the assumption that $\widecheck{S}_{11}(f) \le 
\mathring{\omega}(4) 
S_{7}(f)$ because $S_{7}(f)$ in that case is very small.  Thus, the function on the right in 
Fig.\ \ref{FTD:fig:inoutconef} lies outside $\calC$.

Based on the above argument, it follows in general that for $f \in \calC$,
\begin{multline}
\label{FTD:eq:step1}
\sum_{\lambda = 1}^{\infty} \abs{\hat{f}_{\lambda 2^m}} = \widehat{S}_{0,m}(f) \le 
\widehat{\omega}(m) 
\widecheck{S}_m(f) \le \widehat{\omega}(m)  \mathring{\omega}(r) S_{m-r}(f),  \\ m \ge r 
+ \ell_* 
\ge \ell_*.
\end{multline}
This implies an error bound in terms of the true Fourier coefficients with modest 
wavenumber.  In 
particular \eqref{FTD:eq:step1} holds for the function depicted on the left side of Fig.\ 
\ref{FTD:fig:inoutconef}, but not the one on the right side.

Before going on, we note that we have not specified the parameters  $\ell_*, r, 
\widehat{\omega}$, and $\mathring{\omega}$ for the sake of simplicity.  Their choices 
reflect 
the 
robustness desired by the user, but are meant to be kept constant rather than changed 
for every problem.  The parameter $\ell_*$ 
is the minimum wavenumber for which we expect steady decay to set in.  The parameter 
$r$ controls how small the values of the wavenumber that are used to bound 
the cubature error should be.  The functions $\widehat{\omega}$ and 
$\mathring{\omega}$ are 
the 
inflation factors 
for bounding one sum of Fourier coefficients in terms of another.  See Sec.\ 
\ref{FTD:sec:numerical} for the default choices in 
our algorithm implementations.

While \eqref{FTD:eq:step1} is a step forward, it involves the unknown true Fourier 
coefficients and not the known discrete Fourier coefficients.   We next bound  
$S_{m-r}(f)$ in terms of a sum of discrete Fourier 
coefficients:
\begin{equation*}
\widetilde{S}_{\ell,m}(f) := \sum_{\kappa=\left \lfloor 2^{\ell-1}\right \rfloor}^{2^{\ell}-1} 
\abs{\tilde{f}_{m,\kappa}}.
\end{equation*}
By  \eqref{FTD:eq:aliasing} and the triangle inequality it follows that 
\begin{align}
\nonumber
\widetilde{S}_{m-r,m}(f) & =  \sum_{\kappa=\left \lfloor 2^{m-r-1} \right \rfloor}^{2^{m-r}-1} 
\abs{\tilde{f}_{m,\kappa}} \\
\nonumber
& \ge  \sum_{\kappa=\left \lfloor 2^{m-r-1} \right \rfloor}^{2^{m-r}-1} 
\biggl[\abs{\hat{f}_{\kappa}} - 
\sum_{\lambda = 1}^{\infty}\abs{\hat{f}_{\kappa + \lambda 2^m}}  \biggr] \\
\nonumber
& = S_{m-r}(f) -  \widehat{S}_{m-r,m}(f) \\
& \ge S_{m-r}(f)[1 - \widehat{\omega}(r)  \mathring{\omega}(r)]. 
\label{FTD:eq:step2}
\end{align}
This provides an upper bound on $S_{m-r}(f)$ in terms of the data-based 
$\widetilde{S}_{m-r,m}(f)$, provided that $r$ is large enough to satisfy 
$\widehat{\omega}(r) 
\mathring{\omega}(r)<1$.  Such a choice of $r$ ensures that the aliasing errors are 
modest.  

Combining \eqref{FTD:eq:step1} and \eqref{FTD:eq:step2} with 
\eqref{FTD:eq:error_as_Fourier_kappa}, it is shown in  
\cite{HicJim16a,JimHic16a} 
that  for any $f\in\mathcal{C}$, 
\begin{equation} \label{FTD:eq:data_err_bd}
\abs{\mu - \widehat{\mu}_n} \le  \textup{err}_n : = \frakC(m) \widetilde{S}_{m-r,m}(f), 
\quad   
\frakC(m)  : = 
\frac{\widehat{\omega}(m) 
\mathring{\omega}(r)}{1 - 
\widehat{\omega}(r) 
\mathring{\omega}(r)}, \qquad m \ge \ell_*+r,
\end{equation}
provided that $\widehat{\omega}(r) \mathring{\omega}(r)<1$.  Since 
$\widetilde{S}_{m-r,m}(f)$ 
depends 
only on the 
discrete Fourier coefficients, \eqref{FTD:eq:data_err_bd} is a data-based cubature error 
bound.  One may now increment $m$ (keeping $r$ fixed) until $\textup{err}_n$ is small 
enough, 
where again $n = 2^m$.  

If $\$(f)$ denotes the cost of one function value, then evaluating $f(\bsx_0), 
\ldots, f(\bsx_{2^m-1})$ requires $\$(f)n$ operations.  A fast 
transform then computes $\tilde{f}_{m,0}, \ldots, \tilde{f}_{m,2^m-1}$
in an additional $\calO(n \log(n)) = \calO(m 2^m)$ operations.  So computing 
$\textup{err}_{2^m}$ for each $m$
costs 
$\calO\bigl([\$(f) + m] 2^m\bigr)$ operations.  For 
integrands 
that are cheap to evaluate the $\$(f)$ term is negligible, but for integrands that are 
expensive to integrate $\$(f)$ may be comparable to $m$ given that $m$ might be 
ten 
to twenty.

Using an analogous reasoning as in \eqref{FTD:eq:step2},
\begin{align}
\nonumber
S_\ell(f) & = \sum_{\kappa=\left\lfloor 2^{\ell-1} \right\rfloor}^{2^{\ell}-1} 
\abs{\hat{f}_{\kappa}} \\
\nonumber
& \ge \sum_{\kappa=\left \lfloor 2^{\ell-1} \right \rfloor}^{2^{\ell}-1} 
\biggl[\abs{\tilde{f}_{m,\kappa}} - 
\sum_{\lambda = 1}^{\infty}\abs{\hat{f}_{\kappa + \lambda 2^m}}  \biggr] \\
\nonumber
& = \widetilde{S}_{\ell,m}(f) -  \widehat{S}_{\ell,m}(f) \\
& \ge \widetilde{S}_{\ell,m}(f)/[1 + \widehat{\omega}(m-\ell)  \mathring{\omega}(m-\ell)]. 
\label{FTD:eq:step3NecCond}
\end{align}
Therefore, from \eqref{FTD:eq:step2} and \eqref{FTD:eq:step3NecCond}, for any $\ell, 
m,m'\in\N$ such that $\ell_*\leq\ell\leq \min(m,m')$, it must be the case that
\begin{equation}\label{FTD:eq:NecCond}
\frac{\widetilde{S}_{\ell,m}(f)}{1 + \widehat{\omega}(m-\ell) \mathring{\omega}(m-\ell)} \le 
S_\ell(f) 
\le 
\frac{\widetilde{S}_{\ell,m'}(f)}{1 - \widehat{\omega}(m'-\ell) \mathring{\omega}(m'-\ell)}.
\end{equation}
Equation \eqref{FTD:eq:NecCond} is a data-based \emph{necessary} condition for an 
integrand, 
$f$,  
to lie in $\mathcal{C}$.  If it is found that the right hand side of  
\eqref{FTD:eq:NecCond} is smaller than the left hand side of  \eqref{FTD:eq:NecCond}, 
then $f$ must lie outside $\mathcal{C}$.  In this case the parameters defining the cone 
should be adjusted to expand the cone appropriately, e.g., by increasing 
$\widehat{\omega}$ or $\mathring{\omega}$ by a constant.

By substituting inequality \eqref{FTD:eq:step3NecCond} in the error bound 
\eqref{FTD:eq:data_err_bd}, we get
\[
\textup{err}_n \leq \frac{\widehat{\omega}(m) \mathring{\omega}(r)}{1 - 
\widehat{\omega}(r) 
\mathring{\omega}(r)}[1 + \widehat{\omega}(r) \mathring{\omega}(r)]S_{m-r}(f).
\]
We define $m^*$,
\begin{equation} \label{FTD:eq:abserrcost}
m^*:=\min\left\{m\geq \ell_*+r:\; \frac{\widehat{\omega}(m) \mathring{\omega}(r)}{1 - 
\widehat{\omega}(r) \mathring{\omega}(r)}[1 + \widehat{\omega}(r) 
\mathring{\omega}(r)]S_{m-r}(f)\leq\varepsilon \right\},
\end{equation}
Here $m^*$ depends on the fixed parameters of the algorithm, $\ell_*, r, 
\widehat{\omega},$ and 
$\mathring{\omega}$.   Note that 
$\textup{err}_{2^{m^*}} \le \varepsilon$.

Recall from above that at each step $m$ in our algorithm the  computational cost is 
$\calO\bigl([\$(f)+m]2^{m} \bigr)$.  Thus, the computational cost for our adaptive 
algorithm 
to satisfy the absolute error tolerance, as given in \eqref{FTD:eq:cubprob}, is 
$\calO(\Phi(m^*)2^{m^*})$, where $\Phi(m^*) = 
[\$(f)+ 0]2^{-m^*} + \cdots + [\$(f)+ m^*]2^{0}$.  Since 
\begin{equation*}
\Phi(m^*+1) - \Phi(m^*) = \$(f)2^{-m^*-1} + 2^{-m^*} + \cdots + 2^{0} \le 
\$(f)2^{-m^*-1} + 2,
\end{equation*}
it follows that 
\begin{align*}
\Phi(m^*) & = [\Phi(m^*) - \Phi(m^*-1)] + \cdots + [\Phi(1) - \Phi(0)] \\
& \le [\$(f)2^{-m^*} + 2] + \cdots + [\$(f)2^{-1} + 2] \\
& \le 2[\$(f) + m^*]
\end{align*}
Thus, the cost of making our 
data based error bound no greater than 
$\varepsilon$ is bounded above by $\calO\bigl([\$(f)+m^*]2^{m^*} \bigr)$. 

The algorithm does not assume a rate of decay of the Fourier coefficients but 
automatically senses the rate of decay via the discrete Fourier coefficients.   From 
\eqref{FTD:eq:abserrcost} it is evident that the dependence of the computational cost 
with $\varepsilon$ depends primarily on the unknown rate of decay of $S_{m-r}(f)$ with 
$m$, and secondarily on the specified rate of decay of $\widehat{\omega}(m)$, since all 
other 
parameters are fixed.  For example, assuming $\widehat{\omega}(m) = \calO(1)$, if 
$\hat{f}_\kappa = 
\calO(\kappa^{-p})$, then $S_{m-r}(f) = 
\calO(2^{-(p-1)m}) $, and the total computational cost is 
$\calO(\varepsilon^{-1/(p-1) - 
\delta})$ for all $\delta > 0$.  If $\widehat{\omega}(m)$ decays with $m$, then the 
computational 
cost is less.

\section{General Error Criterion} \label{FTD:sec:GeneralError}

The algorithms summarized above are described  in 
\cite{HicJim16a,JimHic16a} and implemented in the 
Guaranteed Automatic Integration Library (GAIL) \cite{ChoEtal15a} as 
\texttt{cubSobol\_g} 
and \texttt{cubLattice\_g}, respectively.  They
satisfy  the absolute error criterion \eqref{FTD:eq:cubprob} by 
increasing $n$ until $\textup{err}_n$ defined in \eqref{FTD:eq:data_err_bd} is no greater 
than the 
absolute error tolerance, $\varepsilon$.  

There are situations requiring a more general 
error criterion than \eqref{FTD:eq:cubprob}.  In this section we generalize 
the cubature problem to involve a $p$-vector of integrals, $\bsmu$, which are 
approximated by a $p$-vector of sample means, $\widehat{\bsmu}_n$, using $n$ 
samples, and for which we have a $p$-vector of error bounds, $\textbf{err}_n$, given by 
\eqref{FTD:eq:data_err_bd}.  This 
means that $\bsmu \in [\widehat{\bsmu}_n - \textbf{\textup{err}}_n, 
\widehat{\bsmu}_n + \textbf{\textup{err}}_n]$ for integrands in $\calC$.  Given some 
\begin{itemize}
	\item function $v: \Omega \subseteq \R^p \to \R$, 
	\item positive absolute error tolerance $\varepsilon_{\textrm{a}}$, and
	\item relative error tolerance $\varepsilon_{\textrm{r}} < 1$,
\end{itemize}
the  goal is to construct an \emph{optimal} approximation to $v(\mu)$, denoted  
$\hat{v}$, 
which depends on $\widehat{\bsmu}_{n}$ and $\textbf{err}_n$ and satisfies the error 
criterion
\begin{subequations} \label{FTD:eq:errtol}
\begin{gather} 
\label{FTD:eq:errtol_a}
\sup_{\bsmu \in \Omega \cap [\widehat{\bsmu}_n - \textbf{\textup{err}}_n, 
\widehat{\bsmu}_n + 
	\textbf{\textup{err}}_n] }
\textup{tol}(v(\bsmu),\hat{v},\varepsilon_{\textrm{a}},\varepsilon_{\textrm{r}}) \le 1, \\
 \textup{tol}(v,\hat{v},\varepsilon_{\textrm{a}},\varepsilon_{\textrm{r}}) := 
\frac{(v- \hat{v})^2}{\max(\varepsilon_{\textrm{a}}^2, 
	\epsilon_r^2 \abs{v}^2)}, \qquad (\varepsilon_{\textrm{a}},\varepsilon_{\textrm{r}}) \in 
	[0,\infty) \times [0,1)
	\setminus \{\bszero \}.  \label{FTD:eq:errtol_b}
\end{gather}
\end{subequations}
Our hybrid error criterion is satisfied if the actual error is no greater than either the 
absolute 
error 
tolerance or the relative error 
tolerance times the absolute value of the true answer.  If we want to satisfy both an 
absolute error criterion and a relative error 
criterion, then ``$\max$'' in the definition of $\textup{tol}(\cdot)$ should be replaced by 
``$\min$''.  This would require a somewhat different development than what is 
presented 
here.  By optimal we mean that the choice of $\hat{v}$  we prescribe yields the smallest 
possible  left hand side of \eqref{FTD:eq:errtol_a}.  This gives the greatest chance of 
satisfying the error criterion.  The dependence of $\hat{\nu}$ on 
$n$ 
is suppressed in the notation for simplicity.

The common case of estimating the integral itself, $p = 1$ and $v(\mu) = \mu$, is 
illustrated in Table \ref{FTD:table:toltable}.  This includes i) an absolute error criterion 
(see 
\eqref{FTD:eq:cubprob}), ii) a relative error criterion, and iii) a hybrid error criterion that is 
satisfied when either the absolute or relative error tolerances are satisfied.  Note that 
$\hat{v}$ is not necessarily equal to $\hat{\mu}_n$.  For a pure relative error criterion, 
$\hat{v}$ represents a shrinkage of the sample mean towards zero. Fig.\  
\ref{FTD:fig:relerr} illustrates how the optimal choice of $\hat{v}$ may satisfy 
\eqref{FTD:eq:errtol}, when $\hat{v} = \hat{\mu}$ does not.

\begin{table}
	\caption{Examples of the tolerance function in  \eqref{FTD:eq:errtol} and the optimal 
	approximation to the 
	integral when $p =1$ and $v(\mu) = \mu$.
	\label{FTD:table:toltable}}
	\[
	\begin{array}{p{1.2cm}@{\quad}c@{\quad}c@{\quad}c}
	\textbf{Kind} & \textup{tol}(\mu,\hat{v},\varepsilon_{\textrm{a}},\varepsilon_{\textrm{r}}) 
	& \text{Optimal } \hat{v} & 
	\text{Optimal } 
	\textup{tol}(\mu,\hat{v},\varepsilon_{\textrm{a}},\varepsilon_{\textrm{r}})   
	\tabularnewline
	\hline \tabularnewline [-1ex]
	\vspace{-3ex} \text{Absolute} \newline \text{$\varepsilon_{\textrm{r}} = 0$} & 
	\displaystyle \frac{(\mu-\hat{v})^2}{\varepsilon_{\textrm{a}}^2} 
	& \widehat{\mu}_n 
	& \displaystyle \frac{\textup{err}_n^2}{\varepsilon_{\textrm{a}}^2} 
	\tabularnewline [2ex]
	\vspace{-3ex} \text{Relative} \newline \text{$\varepsilon_{\textrm{a}} = 0$}
	& \displaystyle \frac{(\mu-\hat{v})^2}{\varepsilon_{\textrm{r}}^2 \mu^2} 
	& \displaystyle  \frac{\max(\widehat{\mu}_n^2 - \textup{err}_n^2, 0)}{\widehat{\mu}_n}
	& \displaystyle \frac{\textup{err}_n^2}{\varepsilon^2_r \max(\widehat{\mu}_n^2, 
	\textup{err}_n^2) } 
	\tabularnewline
	[2ex]
	\text{Hybrid} & \displaystyle \frac{(\mu-\hat{v})^2}{\max(\varepsilon_{\textrm{a}}^2, 
		\epsilon_r^2 \mu^2)}  &  \text{see \eqref{FTD:eq:vhatveqmu}}
	& \text{see \eqref{FTD:eq:tolveqmu}} 
	\end{array}
	\]
\end{table}

\begin{figure}[!ht]
	\centering
	\includegraphics[width=.75\textwidth]{\FTDFigDirectory/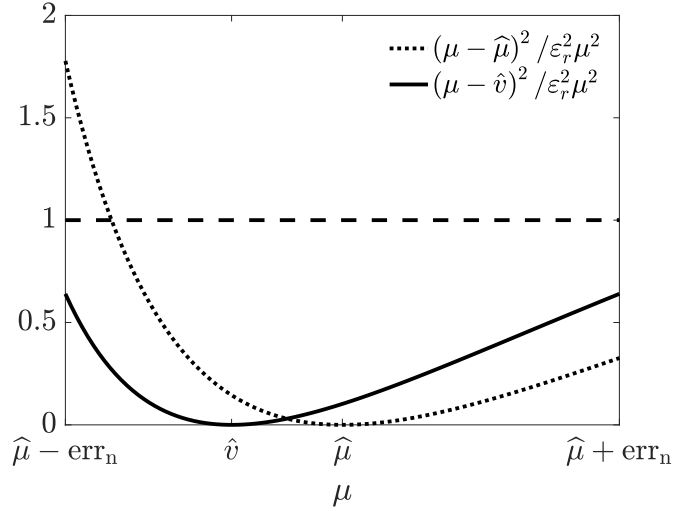}
	\caption{Example of $v(\mu) = \mu$ with the relative error criterion, i.e. 
	$\varepsilon_{\textrm{a}}=0$.  For the optimal choice of $\hat{v}$, $\sup_{\mu \in 
	[\widehat{\mu}_n - \textup{err}_n, \widehat{\mu}_n +  
	\textup{err}_n]}\textup{tol}(\mu,\hat{v},\varepsilon_{\textrm{a}},\varepsilon_{\textrm{r}})<1
	 < 
	\sup_{\mu 
	\in 
	[\widehat{\mu}_n 	- \textup{err}_n, \widehat{\mu}_n + 	
	\textup{err}_n]}\textup{tol}(\mu,\widehat{\mu}_n,\varepsilon_{\textrm{a}},\varepsilon_{\textrm{r}})$.
	\label{FTD:fig:relerr}}
\end{figure}

Define $v_{\pm}$ as the extreme values of $v(\mu)$ for $\widehat{\boldsymbol{\mu}}$ 
satisfying the given error bound:
\begin{equation} \label{FTD:eq:vpmdef}
v_- = \inf_{\bsmu \in \Omega \cap [\widehat{\bsmu}_n - \textbf{\textup{err}}_n, 
\widehat{\bsmu}_n + 
	\textbf{\textup{err}}_n] } v(\bsmu), \qquad
v_+ = \sup_{\bsmu \in \Omega \cap [\widehat{\bsmu}_n - \textbf{\textup{err}}_n, 
\widehat{\bsmu}_n + 
	\textbf{\textup{err}}_n] } v(\bsmu)
\end{equation}
Then the following criterion is equivalent to  \eqref{FTD:eq:errtol}:
	\begin{equation}  \label{FTD:eq:errtol_alt}	
	\sup_{v_- \le v' \le v_+} 
	\textup{tol}(v',\hat{v},\varepsilon_{\textrm{a}},\varepsilon_{\textrm{r}}) 
	\le 1.
	\end{equation}

We claim that the optimal value of the 
estimated integral, i.e., the value of $\hat{v}$ satisfying  \eqref{FTD:eq:errtol_alt}, is
\begin{subequations} \label{FTD:eq:optv}
	\begin{align}
	\label{FTD:eq:optv_a}
	\hat{v} &= \frac{v_-
		\max(\varepsilon_{\textrm{a}},\varepsilon_{\textrm{r}}\abs{v_+}) 
		+ v_+ 
		\max(\varepsilon_{\textrm{a}},\varepsilon_{\textrm{r}}\abs{v_-})}
	{\max(\varepsilon_{\textrm{a}},\varepsilon_{\textrm{r}}\abs{v_+}) 
		+ \max(\varepsilon_{\textrm{a}},\varepsilon_{\textrm{r}}\abs{v_-})} \\
	& = \begin{cases}
	\displaystyle \frac{v_- + v_+}{2}, &\varepsilon_{\textrm{r}} \abs{v_\pm} \le 
	\varepsilon_{\textrm{a}}, \\[1.5ex]
	\displaystyle  \frac{v_s [\varepsilon_{\textrm{a}} + v_{-s}\varepsilon_{\textrm{r}} 
	\textup{sign}(v_s)]}
	{\varepsilon_{\textrm{a}} +\varepsilon_{\textrm{r}}\abs{v_s}}, 
	&\varepsilon_{\textrm{r}} \abs{v_{-s}} \le\varepsilon_{\textrm{a}} 
	<\varepsilon_{\textrm{r}} \abs{v_{s}} , \ s 
	\in \{+,-\}, 
	\\[1.5ex]
	\displaystyle \frac{\abs{v_+v_-}[\textup{sign}(v_+) + \textup{sign}(v_-)]}
	{\abs{v_+} + \abs{v_-}}, &  
	\varepsilon_{\textrm{a}} <\varepsilon_{\textrm{r}} \abs{v_{\pm}}.
	\end{cases}
	\label{FTD:eq:threecaseopttmu}
	\end{align}
\end{subequations}
From  \eqref{FTD:eq:optv_a} it follows that $\hat{v} \in [v_-,v_+]$. Moreover, by 	
\eqref{FTD:eq:threecaseopttmu} $\hat{v}$ is a shrinkage 
estimator: it is either zero or has the same sign as 
$(v_- + v_+)/2$, and its magnitude is no greater than $\abs{(v_- + v_+)/2}$.  Our 
improved GAIL algorithms \texttt{cubSobol\_g} and \texttt{cubLattice\_g}, which are 
under development, are summarized in the following theorem.

\begin{theorem} \label{FTD:thm:gen_err}
	Let our goal be the computation of $v(\bsmu)$, as described at the beginning of this 
	section.  Let the tolerance function be defined as in 
	\eqref{FTD:eq:errtol_b}, let the extreme possible values of $v(\bsmu)$ be 
	defined as in \eqref{FTD:eq:vpmdef}, and let the approximation to $v(\bsmu)$ be 
	defined in terms of $\widehat{\bsmu}_n$ and $\textbf{\textup{err}}_n$ as in 
	\eqref{FTD:eq:optv}.  
	Then, $\hat{v}$ is the optimal approximation to 
	$v(\bsmu)$, and the tolerance function for this optimal choice is given as follows:
	\begin{subequations}\label{FTD:eq:opttolhybrid}
		\begin{align} \nonumber
	& \hspace{-8ex} \inf_{\hat{v}'} \sup_{\bsmu \in \Omega \cap [\widehat{\bsmu}_n - 
	\textbf{\textup{err}}_n, \widehat{\bsmu}_n + \textbf{\textup{err}}_n]} 
		\textup{tol}(v(\bsmu),\hat{v}',\varepsilon_{\textrm{a}},\varepsilon_{\textrm{r}}) \\
		\label{FTD:eq:opttolhybrid_z}
		& = 
		\inf_{\hat{v}'} \sup_{v_- \le v' \le v_+} 
		\textup{tol}(v',\hat{v}',\varepsilon_{\textrm{a}},\varepsilon_{\textrm{r}})  \\
		\label{FTD:eq:opttolhybrid_a}
		& = 
		\sup_{v_- \le v' \le v_+} 
		\textup{tol}(v',\hat{v},\varepsilon_{\textrm{a}},\varepsilon_{\textrm{r}}) 
		\\
		\label{FTD:eq:opttolhybrid_b} 
		& =\textup{tol}(v_\pm,\hat{v},\varepsilon_{\textrm{a}},\varepsilon_{\textrm{r}}) \\
		\label{FTD:eq:opttolhybrid_c}
		& = 
		\frac{(v_+-v_-)^2}{[\max(\varepsilon_{\textrm{a}},\varepsilon_{\textrm{r}}\abs{v_+}) 
			+ \max(\varepsilon_{\textrm{a}},\varepsilon_{\textrm{r}}\abs{v_-})]^2}.
		\end{align}
	\end{subequations}	
By optimal, we mean that the infimum in \eqref{FTD:eq:opttolhybrid_z} is satisfied by 
$\hat{v}$ as claimed in \eqref{FTD:eq:opttolhybrid_a}.  Moreover, it is shown that the 
supremum in \eqref{FTD:eq:opttolhybrid_a} is obtained simultaneously at  $v_+$ and 
$v_-$.

	Our new adaptive quasi-Monte Carlo cubature algorithms increase 
	$n = 2^m$ by incrementing $m$ by one until the right side of 
	\eqref{FTD:eq:opttolhybrid_c} is no larger than one.  The 
	resulting $\hat{v}$ then satisfies the error criterion 
	$\textup{tol}(v(\bsmu),\hat{v},\varepsilon_{\textrm{a}},\varepsilon_{\textrm{r}}) \le 1$.  
\end{theorem}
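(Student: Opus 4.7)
The plan is to reduce the outer optimization to a one-dimensional minimax problem on the interval $[v_-,v_+]$, identify the optimizer by an equalization argument, and then verify that the supremum over the whole interval is in fact attained at its two endpoints simultaneously. For the identity \eqref{FTD:eq:opttolhybrid_z}, I would note that the feasible set $\Omega\cap[\widehat{\bsmu}_n-\textbf{\textup{err}}_n,\widehat{\bsmu}_n+\textbf{\textup{err}}_n]$ is convex, hence connected; under the mild continuity of $v$ implicit in the problem formulation, its image under $v$ is the whole interval $[v_-,v_+]$ defined in \eqref{FTD:eq:vpmdef}, so the outer supremum over $\bsmu$ reduces to a supremum of a scalar $v'$ over $[v_-,v_+]$.

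For the minimax step, abbreviate $D_\pm:=\max(\varepsilon_{\textrm{a}},\varepsilon_{\textrm{r}}\abs{v_\pm})$ and consider the two-point envelope
\[
T(\hat{v}'):=\max\Bigl(\frac{(v_--\hat{v}')^2}{D_-^2},\frac{(v_+-\hat{v}')^2}{D_+^2}\Bigr).
\]
Each component is a convex parabola in $\hat{v}'$ with unique zero at $v_-$ and $v_+$ respectively, so $T$ is convex and is minimized at the unique interior point $\hat{v}\in(v_-,v_+)$ where the two parabolas coincide. Solving the equalization equation $(\hat{v}-v_-)/D_-=(v_+-\hat{v})/D_+$ yields the closed form \eqref{FTD:eq:optv_a}; substituting back gives the common value $(v_+-v_-)^2/(D_++D_-)^2$, matching \eqref{FTD:eq:opttolhybrid_c}. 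Splitting on which argument of the inner $\max$ is active in $D_\pm$ then recovers the three-case formula \eqref{FTD:eq:threecaseopttmu}, from which the shrinkage properties described after \eqref{FTD:eq:optv} are evident.

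The main obstacle is upgrading the two-point identity to the full-interval supremum identity \eqref{FTD:eq:opttolhybrid_a}, i.e.\ verifying that no interior $v'\in(v_-,v_+)$ produces a strictly larger tolerance than the endpoints do. Setting $\phi(v'):=(v'-\hat{v})/D(v')$ with $D(v'):=\max(\varepsilon_{\textrm{a}},\varepsilon_{\textrm{r}}\abs{v'})$, one observes that $D$ is piecewise linear on $\R$ with breakpoints at $\pm\varepsilon_{\textrm{a}}/\varepsilon_{\textrm{r}}$. On the middle piece $\phi$ is affine, and on each outer piece $\phi$ has the form $\pm\varepsilon_{\textrm{r}}^{-1}(1-\hat{v}/v')$, which is monotone in $v'$; hence on each piece $\abs{\phi}$ is maximized at a piece-endpoint, and by continuity the global maximum over $[v_-,v_+]$ lies at $v_\pm$ or at a breakpoint $v^{\ast}$ that falls inside $[v_-,v_+]$. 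A short algebraic manipulation gives the compact identity
\[
v^{\ast}-\hat{v}=\frac{\alpha D_+-\beta D_-}{D_++D_-},\qquad \alpha:=v^{\ast}-v_-\ge 0,\quad \beta:=v_+-v^{\ast}\ge 0,
\]
so the required bound $\abs{\phi(v^{\ast})}\le c:=(v_+-v_-)/(D_++D_-)$ becomes $\abs{\alpha D_+-\beta D_-}\le\varepsilon_{\textrm{a}}(\alpha+\beta)$. I would verify this by splitting into the regimes determined by whether $|v_\pm|$ is above or below $\varepsilon_{\textrm{a}}/\varepsilon_{\textrm{r}}$; in the most involved case both $|v_\pm|>\varepsilon_{\textrm{a}}/\varepsilon_{\textrm{r}}$, the telescoping identity $\alpha v_++\beta v_-=v^{\ast}(v_+-v_-)$ reduces the inequality to an equality, whereas the remaining regimes reduce to variants of $|\alpha-\beta|\le\alpha+\beta$. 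This establishes \eqref{FTD:eq:opttolhybrid_a} and \eqref{FTD:eq:opttolhybrid_b}. Finally, \eqref{FTD:eq:opttolhybrid_z} follows because for any $\hat{v}'\ne\hat{v}$, convexity of $T$ with $\hat{v}$ as its unique minimizer yields $T(\hat{v}')>T(\hat{v})$, and the outer supremum dominates $T(\hat{v}')$. The algorithmic claim for $n=2^m$ is then immediate from the data-based error bound \eqref{FTD:eq:data_err_bd} applied coordinate-wise to each component of $\bsmu$.
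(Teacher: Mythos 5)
Your proof is correct, and while it shares the paper's overall architecture (reduce to the scalar interval $[v_-,v_+]$, show the supremum sits at the endpoints, then show the equalizing $\hat{v}$ is optimal), the two hardest sub-steps are executed by genuinely different arguments. For the interior-supremum step \eqref{FTD:eq:opttolhybrid_b}, the paper differentiates $\textup{tol}(\cdot,\hat{v},\varepsilon_{\textrm{a}},\varepsilon_{\textrm{r}})$ in $v'$ and argues by sign analysis that the only sign change on $[v_-,v_+]$ occurs at $v'=\hat{v}$ (the extra critical point at $\textup{sign}(v_s)\varepsilon_{\textrm{a}}/\varepsilon_{\textrm{r}}$ is harmless because $v_s$ and $\hat{v}$ share a sign); you instead exploit the piecewise monotonicity of $\phi(v')=(v'-\hat{v})/\max(\varepsilon_{\textrm{a}},\varepsilon_{\textrm{r}}\abs{v'})$ and then verify the breakpoint values by the explicit inequality $\abs{\alpha D_+-\beta D_-}\le\varepsilon_{\textrm{a}}(\alpha+\beta)$ — more computational, but it makes transparent the (correct) fact that in the regime $\varepsilon_{\textrm{a}}<\varepsilon_{\textrm{r}}\abs{v_\pm}$ with $v_-<0<v_+$ the supremum is actually attained on a whole region, not just at $v_\pm$. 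For optimality \eqref{FTD:eq:opttolhybrid_z}--\eqref{FTD:eq:opttolhybrid_a}, the paper perturbs $\hat{v}$ directly and checks that one of $\textup{tol}(v_\pm,\hat{v}',\cdot)$ strictly increases, whereas you minimize the convex two-point envelope $T$; your route has the added benefit of \emph{deriving} the formula \eqref{FTD:eq:optv_a} from the equalization condition $(\hat{v}-v_-)/D_-=(v_+-\hat{v})/D_+$ rather than stating it and verifying, which is arguably cleaner. The only place you are looser than you should be is the claim that the remaining breakpoint regimes ``reduce to variants of $\abs{\alpha-\beta}\le\alpha+\beta$'': in the mixed regime $\varepsilon_{\textrm{r}}\abs{v_-}\le\varepsilon_{\textrm{a}}<\varepsilon_{\textrm{r}}\abs{v_+}$ one actually needs the identity $\varepsilon_{\textrm{r}}\abs{v_+}-\varepsilon_{\textrm{a}}=\varepsilon_{\textrm{r}}\beta$ together with $\alpha\le 2\varepsilon_{\textrm{a}}/\varepsilon_{\textrm{r}}$, which does hold but deserves a line of justification.
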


\begin{proof} 
	The gist of the proof is to establish the equalities in \eqref{FTD:eq:opttolhybrid}.  
	Equality 
\eqref{FTD:eq:opttolhybrid_c} follows from the definition of $\hat{v}$ and $v_\pm$.  
Equality \eqref{FTD:eq:opttolhybrid_b}  is proven next, and 
\eqref{FTD:eq:opttolhybrid_a} is proven after that.  Equality \eqref{FTD:eq:opttolhybrid_z} 
follows from definition \eqref{FTD:eq:vpmdef}. 

The derivative of 
$\textup{tol}(\cdot,\hat{v},\varepsilon_{\textrm{a}},\varepsilon_{\textrm{r}})$ is 
\begin{equation*}
\frac{\partial 
\textup{tol}(v',\hat{v},\varepsilon_{\textrm{a}},\varepsilon_{\textrm{r}})}{\partial 
		v'}
= \begin{cases} \displaystyle
\frac{2(v' - \hat{v})} {\varepsilon_{\textrm{a}}^2}, 
& \displaystyle \abs{v'} < \frac{\varepsilon_{\textrm{a}}}{\varepsilon_{\textrm{r}}},\\[2ex]
\displaystyle \frac{2(v' - \hat{v})\hat{v} } 
{\varepsilon_{\textrm{r}}^2v'^3},
& \displaystyle \abs{v'} > \frac{\varepsilon_{\textrm{a}}}{\varepsilon_{\textrm{r}}}.
\end{cases}
\end{equation*}
The sign of this derivative is shown in Fig.\ 	\ref{FTD:fig:signtolderiv}.
For either $\varepsilon_{\textrm{r}} \abs{v_\pm} \le 
\varepsilon_{\textrm{a}}$ or $\varepsilon_{\textrm{a}} \le\varepsilon_{\textrm{r}} 
\abs{v_\pm}$, the 
only critical point in 
$[v_-,v_+]$ is $v' = \hat{v}$, where the tolerance function vanishes.  Thus, the 
maximum value of the tolerance 
function always occurs at the boundaries of the interval.  For $\varepsilon_{\textrm{r}} 
\abs{v_{-s}} 
\le\varepsilon_{\textrm{a}} <\varepsilon_{\textrm{r}} \abs{v_{s}}$, $s \in \{+,-\} $, there is 
also a 
critical point 
at 
$v' = 
\textup{sign}(v_s)\varepsilon_{\textrm{a}}/\varepsilon_{\textrm{r}}$.  However, since 
$v_s$ and 
$\hat{v}$ have the same sign (see \eqref{FTD:eq:threecaseopttmu}), the partial 
derivative of the 
tolerance function with respect to $v'$ does not change sign at this critical point.  
Hence, 
the maximum value of the tolerance function still occurs at the boundaries of the 
interval, and  \eqref{FTD:eq:opttolhybrid_b} is established.

\begin{figure}
	\centering
	\includegraphics[width=6cm]{\FTDFigDirectory/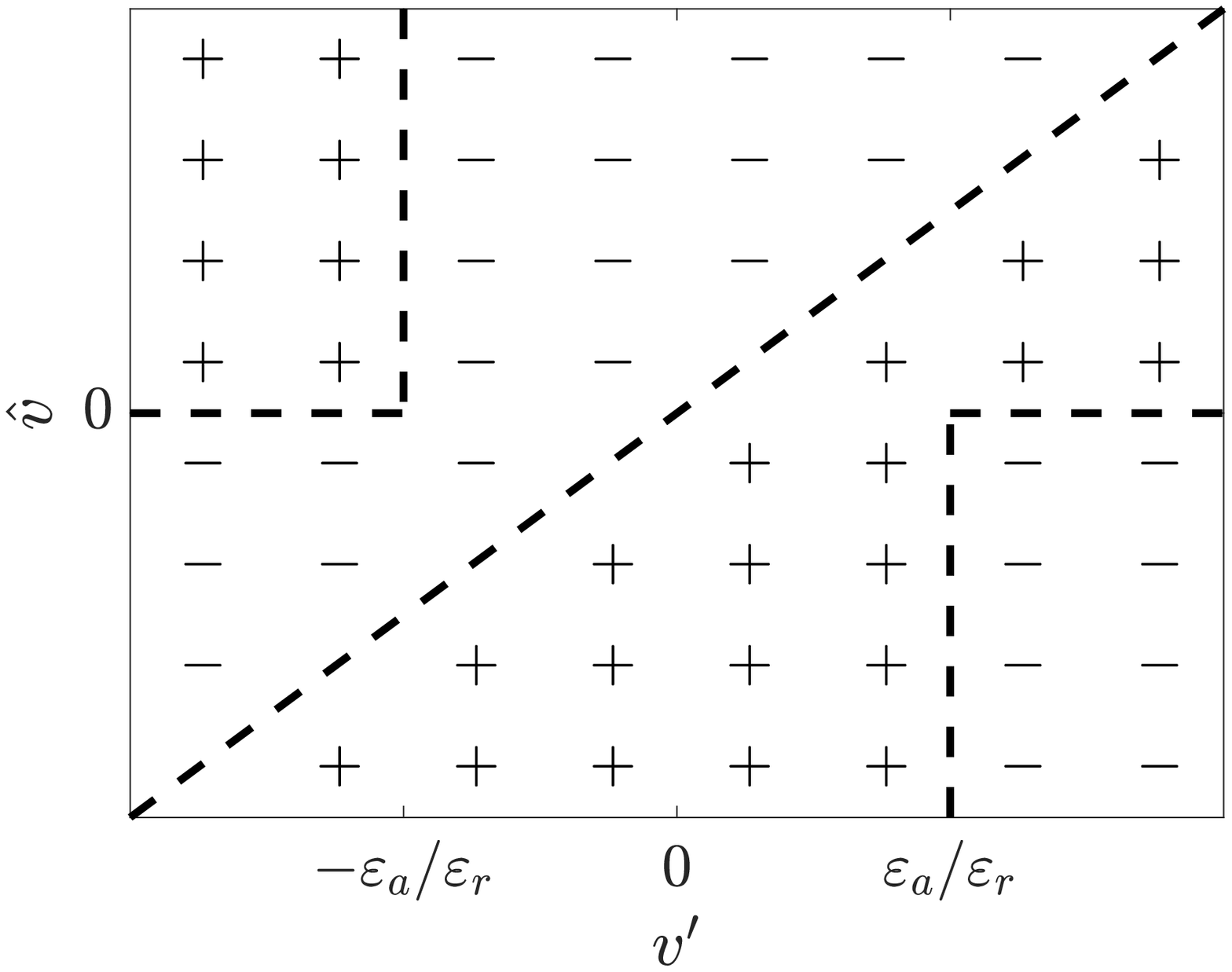}
	\caption{The sign of 
	$\partial \textup{tol}(v',\hat{v},\varepsilon_{\textrm{a}},\varepsilon_{\textrm{r}})/\partial 
	v'$. 
	\label{FTD:fig:signtolderiv}}
\end{figure}

To prove assertion \eqref{FTD:eq:opttolhybrid_a}, consider $\hat{v}'$, 
some alternative to $\hat{v}$.  Then 
\begin{align*}
\textup{tol}(v_{\pm},\hat{v}',\varepsilon_{\textrm{a}},\varepsilon_{\textrm{r}}) - 
\textup{tol}(v_{\pm},\hat{v},\varepsilon_{\textrm{a}},\varepsilon_{\textrm{r}}) 
& = \frac{(v_{\pm} - \hat{v}')^2 
	- (v_{\pm} - \hat{v})^2} 
	{\max(\varepsilon_{\textrm{a}}^2,\varepsilon_{\textrm{r}}^2v_{\pm}^2)}
	\\
& = \frac{(\hat{v}'- \hat{v} - 2v_{\pm} ) 
	(\hat{v}' - \hat{v})} 
{\max(\varepsilon_{\textrm{a}}^2,\varepsilon_{\textrm{r}}^2v_{\pm}^2)}.
\end{align*}
This difference is positive for the $+$ sign if $\hat{v}' \in (-\infty, \hat{v})$ and 
positive for  the $-$ sign if $\hat{v}' \in (\hat{v},\infty)$.  Thus, the proof of Theorem 
\ref{FTD:thm:gen_err} is complete.
\end{proof}

We return to the special case of $v(\mu) = \mu$.  The following corollary interprets 
Theorem \ref{FTD:thm:gen_err} for this case, and the theorem that follows extends 
the computational cost upper bound in \eqref{FTD:eq:abserrcost} for these new 
quasi-Monte Carlo 
cubature algorithms.
\begin{corollary} \label{FTD:cor:veqmu}
	For $p=1$ and $v(\mu) = \mu$, it follows that $v_{\pm} = \mu_n \pm \textup{err}_n$, 
	\begin{equation}
	\hat{v} = \frac{(\widehat{\mu}_n - \textup{err}_n)
		\max(\varepsilon_{\textrm{a}},\varepsilon_{\textrm{r}}\abs{\widehat{\mu}_n + 
		\textup{err}_n}) 
		+ (\widehat{\mu}_n + \textup{err}_n) 
		\max(\varepsilon_{\textrm{a}},\varepsilon_{\textrm{r}}\abs{\widehat{\mu}_n - 
		\textup{err}_n})}
	{\max(\varepsilon_{\textrm{a}},\varepsilon_{\textrm{r}}\abs{\widehat{\mu}_n + 
	\textup{err}_n}) 
		+ \max(\varepsilon_{\textrm{a}},\varepsilon_{\textrm{r}}\abs{\widehat{\mu}_n - 
		\textup{err}_n})} 
		\label{FTD:eq:vhatveqmu},
	\end{equation}
	\begin{multline}
\sup_{\widehat{\mu}_n - \textup{err}_n \le \mu \le \widehat{\mu}_n + \textup{err}_n} 
\textup{tol}(\mu,\hat{v},\varepsilon_{\textrm{a}},\varepsilon_{\textrm{r}})  \\
= 
\frac{4 
\textup{err}_n^2}{[\max(\varepsilon_{\textrm{a}},\varepsilon_{\textrm{r}}\abs{\widehat{\mu}_n
 + 
\textup{err}_n}) 
	+ \max(\varepsilon_{\textrm{a}},\varepsilon_{\textrm{r}}\abs{\widehat{\mu}_n - 
	\textup{err}_n})]^2}. 
	\label{FTD:eq:tolveqmu}
\end{multline}
	
\end{corollary}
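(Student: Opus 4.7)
The plan is to obtain Corollary~\ref{FTD:cor:veqmu} as a direct specialization of Theorem~\ref{FTD:thm:gen_err} to the scalar case $p=1$, $v(\mu)=\mu$, where no optimization over $\bsmu$ is actually required because $v$ is the identity.

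First, I would evaluate the extreme values $v_{\pm}$ from definition \eqref{FTD:eq:vpmdef}. Since $v$ is the identity, $\Omega = \R$, and the constraint set is the interval $[\widehat{\mu}_n - \textup{err}_n, \widehat{\mu}_n + \textup{err}_n]$, the infimum and supremum are attained at the endpoints of this interval, giving $v_- = \widehat{\mu}_n - \textup{err}_n$ and $v_+ = \widehat{\mu}_n + \textup{err}_n$. This establishes the first assertion of the corollary.

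Next, I would substitute these values of $v_{\pm}$ directly into the closed-form expression \eqref{FTD:eq:optv_a} for the optimal estimator $\hat{v}$ from Theorem~\ref{FTD:thm:gen_err}. Every occurrence of $v_-$ is replaced by $\widehat{\mu}_n - \textup{err}_n$ and every occurrence of $v_+$ by $\widehat{\mu}_n + \textup{err}_n$, with no simplification needed, producing exactly the formula \eqref{FTD:eq:vhatveqmu}. The optimality claim carries over verbatim from the theorem, since the equivalence in \eqref{FTD:eq:opttolhybrid_z} shows that optimizing over $\bsmu$ in $\Omega \cap [\widehat{\bsmu}_n - \mathbf{err}_n, \widehat{\bsmu}_n + \mathbf{err}_n]$ reduces in this scalar case to optimizing over $v' \in [v_-, v_+]$.

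Finally, I would obtain the tolerance formula \eqref{FTD:eq:tolveqmu} by substituting $v_{\pm} = \widehat{\mu}_n \pm \textup{err}_n$ into \eqref{FTD:eq:opttolhybrid_c}. The numerator becomes $(v_+ - v_-)^2 = (2\,\textup{err}_n)^2 = 4\,\textup{err}_n^2$, and the denominator is a direct substitution, yielding the stated expression. There is no real obstacle here: the entire proof is mechanical substitution, and the only care needed is to note that the sign cases in \eqref{FTD:eq:threecaseopttmu} of the general formula need not be unpacked because the unified expression \eqref{FTD:eq:optv_a} already handles all regimes of $\varepsilon_{\textrm{a}}$ and $\varepsilon_{\textrm{r}}$ uniformly.
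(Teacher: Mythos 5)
Your proposal is correct and matches the paper's (implicit) argument exactly: the corollary is a mechanical specialization of Theorem~\ref{FTD:thm:gen_err}, obtained by noting that for the identity map $v_{\pm} = \widehat{\mu}_n \pm \textup{err}_n$ and then substituting into \eqref{FTD:eq:optv_a} and \eqref{FTD:eq:opttolhybrid_c}. No gaps.
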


\begin{theorem}
	For the special case described in Corollary \ref{FTD:cor:veqmu}, the computational 
	cost of obtaining an 
	approximation to the integral $\mu$ satisfying the generalized error criterion 
	$\textup{tol}(\mu,\hat{v},\varepsilon_{\textrm{a}},\varepsilon_{\textrm{r}}) \le 1$ 
	according to 
	the adaptive 
	quasi-Monte Carlo cubature algorithm 
	described in Theorem 
	\ref{FTD:thm:gen_err} is $\calO\bigl([\$(f) + m^*] 
	2^{m^*}\bigr)$, where 
	\begin{multline*}
	m^*:=\min\left\{m\geq \ell_*+r:\; \right . \\
	\left .(1+\varepsilon_{\textrm{r}})\frac{\widehat{\omega}(m) \mathring{\omega}(r)}{1 - 
	\widehat{\omega}(r) 
	\mathring{\omega}(r)}[1 + \widehat{\omega}(r) 
	\mathring{\omega}(r)]S_{m-r}(f)\leq\max(\varepsilon_{\textrm{a}},\varepsilon_{\textrm{r}}\abs{\mu})
	 \right\}.
	\end{multline*}
\end{theorem}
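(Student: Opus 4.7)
The plan is to show that once $m$ reaches the value $m^*$ defined in the theorem, the algorithm's stopping criterion, obtained by requiring the right-hand side of \eqref{FTD:eq:opttolhybrid_c} (equivalently \eqref{FTD:eq:tolveqmu}) to be $\le 1$, is automatically satisfied. Once this is established, the cost analysis developed for the absolute-error case after \eqref{FTD:eq:abserrcost} transfers verbatim: at each step the work is $\calO([\$(f)+m]2^m)$, and telescoping gives the total cost $\calO([\$(f)+m^*]2^{m^*})$.

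First, I would rewrite the stopping criterion from \eqref{FTD:eq:tolveqmu} as
\[
2\,\textup{err}_n \le
\max(\varepsilon_{\textrm{a}},\varepsilon_{\textrm{r}}\abs{\widehat{\mu}_n + \textup{err}_n})
+ \max(\varepsilon_{\textrm{a}},\varepsilon_{\textrm{r}}\abs{\widehat{\mu}_n - \textup{err}_n}),
\]
and bound $\textup{err}_n$ purely in terms of the true Fourier data. Combining the data-based bound \eqref{FTD:eq:data_err_bd} with the upper inequality in \eqref{FTD:eq:step3NecCond} applied at $\ell = m-r$, $m'=m$, yields
\[
\textup{err}_n \le \frac{\widehat{\omega}(m)\mathring{\omega}(r)\,[1+\widehat{\omega}(r)\mathring{\omega}(r)]}{1-\widehat{\omega}(r)\mathring{\omega}(r)}\,S_{m-r}(f),
\]
so the defining inequality for $m^*$ in the theorem statement translates directly into $(1+\varepsilon_{\textrm{r}})\,\textup{err}_n \le \max(\varepsilon_{\textrm{a}},\varepsilon_{\textrm{r}}\abs{\mu})$ at $m = m^*$.

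Next, I would split into two cases. If $\varepsilon_{\textrm{a}} \ge \varepsilon_{\textrm{r}}\abs{\mu}$, then $(1+\varepsilon_{\textrm{r}})\textup{err}_n \le \varepsilon_{\textrm{a}}$, so $2\,\textup{err}_n \le 2\varepsilon_{\textrm{a}}/(1+\varepsilon_{\textrm{r}}) \le 2\varepsilon_{\textrm{a}}$, and since each of the two $\max$ terms is at least $\varepsilon_{\textrm{a}}$, the stopping criterion holds. If instead $\varepsilon_{\textrm{a}} < \varepsilon_{\textrm{r}}\abs{\mu}$, then $(1+\varepsilon_{\textrm{r}})\textup{err}_n \le \varepsilon_{\textrm{r}}\abs{\mu}$, i.e.\ $\textup{err}_n \le \varepsilon_{\textrm{r}}(\abs{\mu}-\textup{err}_n)$. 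Because $\abs{\widehat{\mu}_n} \ge \abs{\mu}-\textup{err}_n$ by \eqref{FTD:eq:data_err_bd}, this gives $\varepsilon_{\textrm{r}}\abs{\widehat{\mu}_n} \ge \textup{err}_n$. Hence
\[
\varepsilon_{\textrm{r}}\abs{\widehat{\mu}_n+\textup{err}_n}+\varepsilon_{\textrm{r}}\abs{\widehat{\mu}_n-\textup{err}_n}
\ge 2\varepsilon_{\textrm{r}}\abs{\widehat{\mu}_n} \ge 2\,\textup{err}_n,
\]
and the stopping criterion again holds.

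Finally, having shown that the algorithm terminates no later than $m=m^*$, I would invoke the telescoping argument from Sec.\ \ref{FTD:sec:ErrEst} (the inequality $\Phi(m^*) \le 2[\$(f)+m^*]$) to conclude the overall cost is $\calO\bigl([\$(f)+m^*]\,2^{m^*}\bigr)$. The main subtlety is the relative-error case, where one must pass from the unobservable $\abs{\mu}$ appearing in the definition of $m^*$ to the observable $\abs{\widehat{\mu}_n\pm\textup{err}_n}$ that appears in the stopping criterion; the single extra factor of $1+\varepsilon_{\textrm{r}}$ built into the definition of $m^*$ is precisely what absorbs this gap via the triangle inequality, and identifying the right factor is the only nontrivial step.
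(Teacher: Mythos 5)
Your proposal is correct and follows essentially the same route as the paper: bound $\textup{err}_n$ by $\frakC(m)[1+\widehat{\omega}(r)\mathring{\omega}(r)]S_{m-r}(f)$ via \eqref{FTD:eq:step3NecCond}, show the defining inequality for $m^*$ forces the stopping criterion \eqref{FTD:eq:tolveqmu} to be at most one, and then reuse the telescoping cost bound from Sec.\ \ref{FTD:sec:ErrEst}. The only cosmetic difference is that you handle the two regimes of the $\max$ separately, whereas the paper compresses both into the single inequality $\max(\varepsilon_{\textrm{a}},\varepsilon_{\textrm{r}}\abs{\widehat{\mu}_n+\textup{err}_n})+\max(\varepsilon_{\textrm{a}},\varepsilon_{\textrm{r}}\abs{\widehat{\mu}_n-\textup{err}_n})\ge 2\max(\varepsilon_{\textrm{a}},\varepsilon_{\textrm{r}}\abs{\mu})-2\varepsilon_{\textrm{r}}\textup{err}_n$.
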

\begin{proof}
	For each $n = 2^m$, we know that our algorithm produces $\widehat{\mu}_n$ and 
	$\textup{err}_n$ satisfying $\widehat{\mu}_n - \textup{err}_n \le \mu \le 
	\widehat{\mu}_n + \textup{err}_n$.  
	This implies that 
	\[
	\max(\varepsilon_{\textrm{a}},\varepsilon_{\textrm{r}}\abs{\widehat{\mu}_n + 
	\textup{err}_n}) 
	+ \max(\varepsilon_{\textrm{a}},\varepsilon_{\textrm{r}}\abs{\widehat{\mu}_n - 
	\textup{err}_n}) 
	\ge 2\max(\varepsilon_{\textrm{a}},\varepsilon_{\textrm{r}}\abs{\mu}) - 
	2\varepsilon_{\textrm{r}} \textup{err}_n.
	\]	
	Thus, the right hand side of \eqref{FTD:eq:tolveqmu} must be no greater than one if 
	\[
	\textup{err}_n \le 
	\frac{\max(\varepsilon_{\textrm{a}},\varepsilon_{\textrm{r}}\abs{\mu})}{1+\varepsilon_{\textrm{r}}}.
	\]
	Applying the logic that leads to \eqref{FTD:eq:abserrcost} completes the proof.
\end{proof}

The cost upper bound depends on various parameters as one would expect.  The 
computational cost may increase if 
\begin{itemize}
	\item $\varepsilon_{\textrm{a}}$ decreases,
	\item $\varepsilon_{\textrm{r}}$ decreases,
	\item $\abs{\mu}$ decreases,
	\item the Fourier coefficients of the integrand increase, or
	\item the cone $\mathcal{C}$ expands because $\ell_*$, $\widehat{\omega}$, or 
	$\mathring{\omega}$ increase.
\end{itemize}

\section{Numerical Implementation} \label{FTD:sec:numerical}

The algorithm described here is intended to be released in the next release of GAIL 
\cite{ChoEtal15a} as \texttt{cubSobol\_g} and \texttt{cubLattice\_g}, coded 
in MATLAB.  These two functions use the Sobol' sequences provided by 
MATLAB  2017a \cite{MAT9.2} 
and the lattice generator
\texttt{exod2\_base2\_m20.txt} from  Dirk Nuyens'  website \cite{Nuy17a}, respectively.  
Our algorithm sets its default parameters as follows:
\begin{equation}
\ell_* = 6, \qquad r = 4, \qquad \frakC(m) = 5 \times 2^{-m}.
\end{equation}
These choices are based on experience and are used in the examples below.  A larger 
$\ell_*$ allows the Fourier 
coefficients of the integrand to behave erratically over a larger initial segment of 
wavenumbers.  A larger $r$ decreases the impact of aliasing in estimating the true 
Fourier coefficients by their discrete analogues.  Increasing $\ell_*$ or $r$ increases 
$2^{\ell_*+r}$, the minimum number of sample points used by the algorithms.  The inputs 
to the algorithms are 
\begin{itemize}
	\item a black-box $p$-vector function $\bsf$, such that $\bsmu = 
	\mathbb{E}[\bsf(\bsX)]$ for $\bsX \sim \calU[0,1]^d$, 
	\item a solution function $v:\R^p \to \R$,
	\item functions for computing $v_{\pm}$ as described in \eqref{FTD:eq:vpmdef},
	\item an absolute error tolerance, $\varepsilon_{\textrm{a}}$, and
	\item a relative error tolerance $\varepsilon_{\textrm{r}}$.
\end{itemize}
The algorithm increases $m$ incrementally until the right side of 
\eqref{FTD:eq:opttolhybrid_c} does 
not exceed one.  At this point the algorithm returns $\hat{v}$ as given by 
\eqref{FTD:eq:optv}.

\begin{example}\label{FTD:ex:exampleMulti}
We illustrate the hybrid error criterion by estimating multivariate normal probabilities for a 
distribution with mean $\bszero$ and covariance matrix $\mathsf{\Sigma}$:
\begin{equation}\label{FTD:eq:multivariateProba}
v(\mu) = \mu = \mathbb{P}\left[\bsa \leq \bsX\leq \bsb\right]=\int_{[\bsa, \bsb]} 
\frac{\E^{-\bsx^T\mathsf{\Sigma}^{-1}\bsx/2}}{(2\pi)^{d/2}\abs{\mathsf{\Sigma}}^{1/2}}\,\D\bsx.
\end{equation}
The transformation proposed by Genz \cite{Gen93} is used write this as an 
integral over the $d-1$ dimensional unit cube.  As discussed in \cite{Gen93,HicHon97a}, 
when $\bsa=-\boldsymbol{\infty}$, 
$\mathsf{\Sigma}_{ij}=\sigma$ if $i\neq j$, and $\mathsf{\Sigma}_{ii}=1$, the exact value 
of \eqref{FTD:eq:multivariateProba} reduces to  a 1-dimensional 
integral that can be accurately estimated by a standard quadrature rule. This value is 
taken to be the true $\mu$.

We perform $1000$ adaptive integrations:  $500$ 
using our cubature rule based on randomly scrambled and digitally shifted Sobol' 
sequences (\texttt{cubSobol\_g}) and $500$ using our cubature rule based on 
randomly shifted rank-1 lattice node sequences, (\texttt{cubLattice\_g}).  Default 
parameters are used.  For each case we 
choose $\sigma \sim \mathcal{U}[0,1]$, dimension $d = \lfloor 
500^D\rfloor$ with $D\sim\mathcal{U}[0,1]$, and $\bsb \sim 
\mathcal{U}[0,\sqrt{d}]^d$. The dependence of $\bsb$ on the dimension of the problem 
ensures that the estimated probabilities are of the same order of magnitude for all $d$. 
Otherwise, the higher the dimension, the smaller the value of the probabilities the test 
would be estimating. The execution time and $\textup{tol}(\mu,\hat{v},0.01,0.05)$ 
are 
shown in Fig.\ \ref{FTD:fig:testHybridErrorMulti}.

\begin{figure}[!ht]
\centering
\includegraphics[width=.47\textwidth]{\FTDFigDirectory/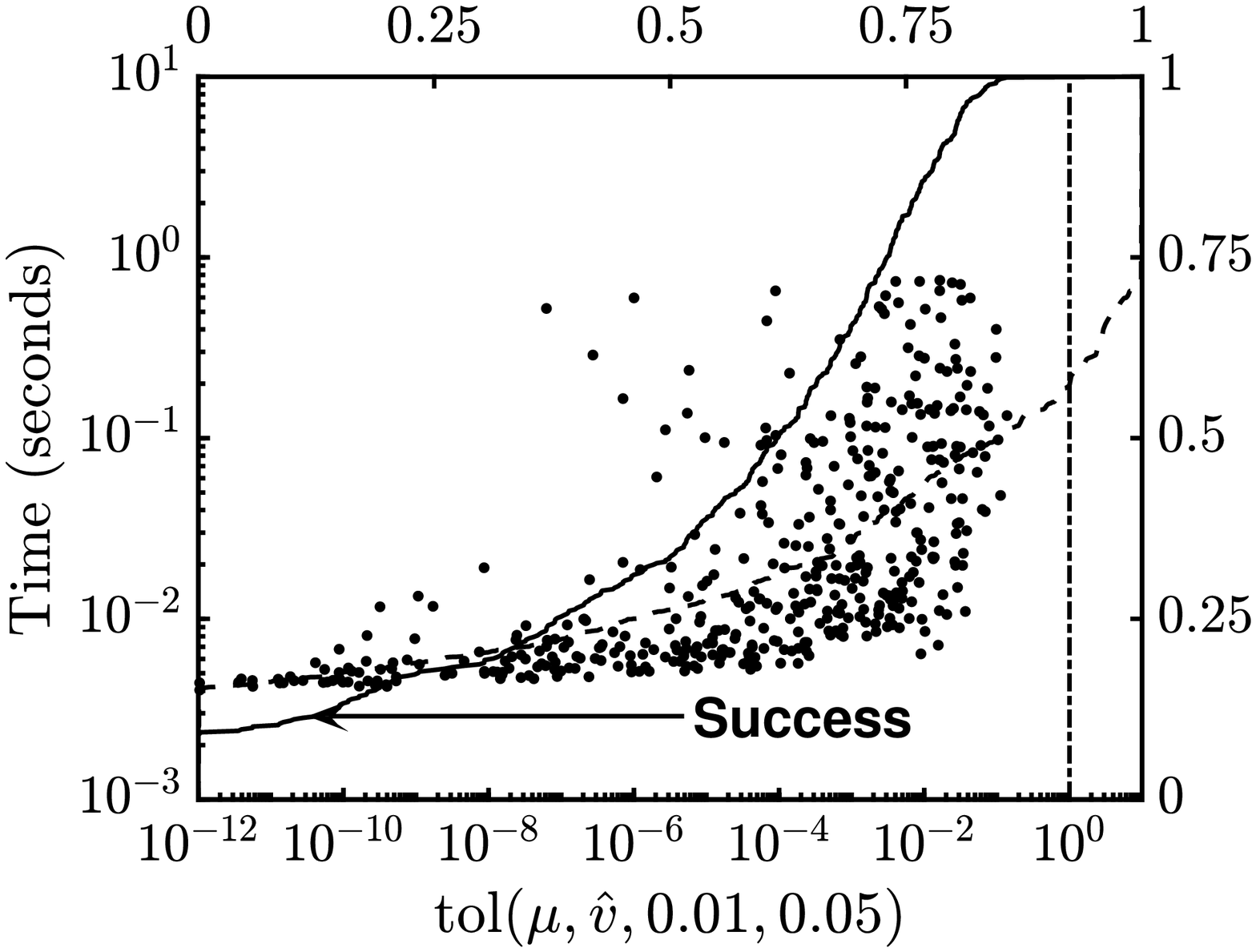}\quad
\includegraphics[width=.47\textwidth]{\FTDFigDirectory/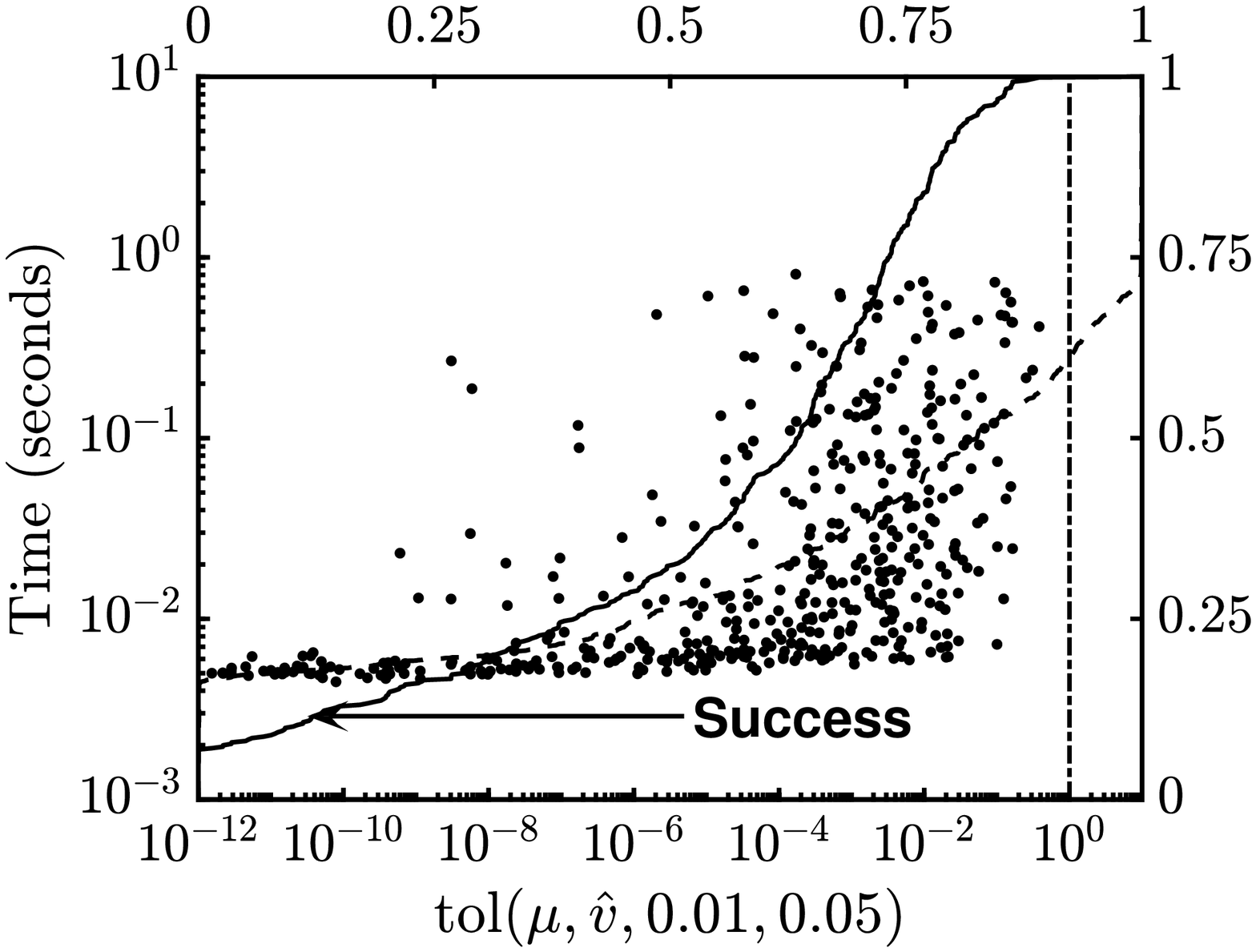}
\caption{On the left, $500$ integration results using scrambled and 
digitally shifted Sobol' sequences, \texttt{cubSobol\_g}. On the right, tolerance values 
and computation times of integrating $500$ 
multivariate normal probabilities using randomly shifted rank-1 lattice node sequences, 
\texttt{cubLattice\_g}.  If an integrand is in $\mathcal{C}$, its dot 
must lie to  the left of the vertical dot-dashed line denoting 
$\textup{tol}(\mu,\hat{v},0.01,0.05)=1$. 
The solid and dashed curves represent the 
empirical distributions of tolerance values and times 
respectively.}\label{FTD:fig:testHybridErrorMulti}
\end{figure}

Satisfying the error criterion is equivalent to having
$\textup{tol}(\mu,\hat{v},0.01,0.05) \le 1$, which happens in every case.  A very small 
value of 
$\textup{tol}(\mu,\hat{v},0.01,0.05)$ means that the approximation is much more 
accurate than 
required, which may be due to coincidence or due to the minimum sample size used, $n 
= 2^{10}$.  In Fig.\ \ref{FTD:fig:testHybridErrorMulti} the error tolerances are fixed and 
do not affect the computation time.  However, the computation time does depend on the 
dimension, $d$, since higher dimensional problems tend to be 
harder to solve. The performances of 
\texttt{cubSobol\_g} and \texttt{cubLattice\_g} are similar.

\end{example}

\begin{example}\label{FTD:ex:SobolIndEx}  Sobol' indices \cite{Sob90,Sob01}, which 
arise in uncertainty quantification, depend on more than 
one 
integral.  Suppose that one is interested in how an output, $Y := g(\bsX)$  depends on 
the 
input $\bsX\sim 
\mathcal{U}[0,1]^d$, and $g$ has a complicated or 
unknown structure.  For example, 
$g$ might be the output of a computer simulation.  
For any coordinate indexed by $j=1,\dots,d$,  the normalized closed first-order Sobol' 
index for coordinate $j$, commonly denoted as $\underline{\tau}_j^2/\sigma^2$, involves 
three integrals:
\begin{subequations} \label{FTD:eq:sob_ind}
\begin{gather}
v(\bsmu) := \frac{\mu_1}{\mu_2-\mu_3^2}, \qquad
\mu_1: = \int_{[0,1)^{2d}} [g(\bsx_j:{\bsx'}_{-j})-
	g(\bsx')] g(\bsx) \, \D\bsx \, \D\bsx', \\
	\mu_2 : = \int_{[0,1)^{d}} g(\bsx)^2 \, \D\bsx, \qquad
	\mu_3 := \int_{[0,1)^{d}} g(\bsx) \, \D\bsx.
\end{gather}
\end{subequations}
Here, $(\bsx_j:{\bsx'}_{-j})\in [0,1)^d$ denotes a point whose $r^{\text{th}}$ coordinate is 
$x_r$ if $r=j$, and  $x'_r$ otherwise.  By definition, the value of these normalized indices 
must lie between $0$ and $1$, and both the numerator and denominator in the 
expression for $v(\bsmu)$ are non-negative. Therefore, the domain of the function $v$ 
is 
$\Omega := \{\bsmu \in [0,\infty)^3 :  0 \le \mu_1 \le \mu_2 - \mu_3^2 \}$.   Thus, 
given 
$\widehat{\bsmu}_n$ and $\textbf{\textup{err}}_n$, the 
values of $v_{\pm}$ defined in  \eqref{FTD:eq:vpmdef} are 
\begin{equation}
v_\pm = \begin{cases} 
0, \qquad \mu_{n,1} \pm \textup{err}_{n,1}\leq 0, \\
1, \qquad \mu_{n,1} \pm \textup{err}_{n,1} > \max\bigl(0, \mu_{n,2} \mp \textup{err}_{n,2} 
-(\mu_{n,3} \pm 
\textup{err}_{n,3})^2\bigr), \\
\displaystyle \frac{\mu_{n,1} \pm \textup{err}_{n,1}}{\mu_{n,2} \mp \textup{err}_{n,2} 
-(\mu_{n,3} \pm 
	\textup{err}_{n,3})^2}, \qquad \text{otherwise}.
\end{cases}
\end{equation}

We estimate the first-order Sobol' indices of the test function in Bratley \emph{et al.} 
\cite{BraFoxNie92} using randomly scrambled and digitally shifted Sobol' sequences and 
the same algorithm parameters as in Ex.\ 
\ref{FTD:ex:exampleMulti}:
\[
g(\bsX)=\sum_{i=1}^6 (-1)^i \prod_{j=1}^i 
\bsX_j.
\]
\begin{equation*}
\begin{array}{ r | ccccccc }
	j& 1  &  2 & 3 &  4 & 5	& 6 \\
	n & 8~192 & 4~096 & 1~024 & 1~024 & 1~024 & 1~024  \\
	v & 0.6529 & 0.1791 & 0.0370 & 0.0133 & 0.0015 & 0.0015\\
	\hat{v} & 0.6495 &  0.1794  &  0.0336  &  0.0134  &  0.0010 & 0.0016\\
	v(\widehat{\bsmu}_n) & 0.6402 & 0.1700 & 0.0336 & 0.0134 & 0.0009 & 0.0016\\
    \textup{tol}(v,\hat{v},5 \times10^{-3},0) 
    & 0.4469 & 0.0024 & 0.4646 & 0.0003 & 0.0112 &  0.0002 \\
	\textup{tol}(v,v(\widehat{\bsmu}_n),5 \times10^{-3},0) 
	& 6.4623 & 3.3313 & 0.4765 & 0.0002 & 0.0113 & 0.0002 
\end{array}
\end{equation*}
The value of $n$ chosen by our adaptive algorithm and the actual value of the tolerance 
function,
$\textup{tol}(v,\hat{v},5 \times 10^{-3},0)$, are shown.  Since none of those tolerance 
values 
exceed one, our algorithm correctly provides $\hat{v}$ for each coordinate $j$. In 
the last row above, we replaced our optimal $\hat{v}$ by $v(\widehat{\bsmu}_n)$ for the 
same $n$ as returned by our algorithm. Interestingly, this approximation to the Sobol' 
indices, while perhaps intuitive, does not satisfy the absolute error criterion because 
sometimes $\textup{tol}(v,v(\widehat{\bsmu}_n),5 \times 10^{-3},0)$ exceeds one.  This 
reflects 
how  $v(\widehat{\bsmu}_n)$ differs from $v$ much more than $\hat{v}$ 
does.  An extensive study on how to estimate first-order and total effect Sobol' indices 
using the automatic quasi-Monte Carlo cubature is provided in \cite{GilJim16b}.
\end{example}

\section{Control Variates} \label{FTD:sec:CV}
The results in this section mainly follow the work of Da Li \cite{Li16a}. Control variates 
are commonly used to improve the efficiency of IID Monte Carlo 
integration.  If one chooses a vector of functions $\bsg:[0,1)^d \to \R^q$ for which 
$\bsmu_\bsg := 
\int_{[0,1)^d} \bsg(\bsx) \, \D \bsx$ is known, then 
\begin{equation*}
\mu := \int_{[0,1)^d} f(\bsx) \, \D \bsx = \int_{[0,1)^d} h_{\bsbeta}(\bsx) \, \D \bsx, \qquad 
\text{where } h_{\bsbeta}(\bsx):=f(\bsx)+\bsbeta^T(\bsmu_\bsg-\bsg(\bsx)),
\end{equation*}
for any choice of $\bsbeta$.  The goal is to choose an optimal $\bsbeta$ to make 
\begin{equation*}
\widehat{\mu}_{\bsbeta,n} : = \frac 1n \sum_{i=0}^{n-1} h_{\bsbeta}(\bsx_i)
\end{equation*}
sufficiently close to  $\mu$ with the least expense, $n$, possible.

If $\bsx_0, \bsx_1, \ldots$ are  IID $\calU[0,1)^d$, then 
$\widehat{\mu}_{\bsbeta,n}$ is an unbiased estimator for $\mu$ for any choice of $\bsbeta$, and the variance of the control variates estimator  may be expressed as 
\begin{equation*}
\textup{var}(\widehat{\mu}_{\bsbeta,n}) = \frac{\textup{var}(h_{\bsbeta}(\bsx_0))}{n} = 
\frac 1n 
\sum_{\kappa=1}^\infty \abs{\hat{f}_{\kappa}-\bsbeta^T\hat{\bsg}_{\kappa}}^2,
\end{equation*}
where $\hat{\bsg}_{\kappa}$ are the Fourier coefficients of $\bsg$.  Since 
$\bsbeta^T\bsmu_\bsg$ is constant, it does not enter into the calculation of the 
variance.  The optimal choice 
of $\bsbeta$, which minimizes $\textup{var}(\widehat{\mu}_{\bsbeta,n})$, is 
\[
\bsbeta_{\textup{MC}}=\frac{\textup{cov}\bigl(f(\bsx_0),\bsg(\bsx_0)\bigr)}{\textup{var}\bigl
(\bsg(\bsx_0) 
\bigr)}.
\]
Although $\bsbeta_{\textup{MC}}$ cannot be computed exactly, it may be well 
approximated in terms of sample estimates of the quantities on the right hand side. 

However, if $\bsx_0, \bsx_1, \ldots$ are the points described in Sec. \ref{FTD:sec:ErrEst}, then the error depends on only some of the Fourier coefficients, and \eqref{FTD:eq:error_as_Fourier_kappa} and \eqref{FTD:eq:data_err_bd} lead to
\begin{multline} \label{FTD:eq:CV_data_err_bd}
\abs{\mu - \widehat{\mu}_{\bsbeta,n}} \le \sum_{\lambda = 1}^{\infty} 
\abs{\hat{f}_{\lambda 
2^m} - \bsbeta^T\hat{\bsg}_{\lambda 
2^m} } \le  \frac{\widehat{\omega}(m) \mathring{\omega}(r)}{1 - 
	\widehat{\omega}(r) 
	\mathring{\omega}(r)}\widetilde{S}_{m-r,m}(f - \bsbeta^T\bsg),
\\
\text{provided } f - \bsbeta^T\bsg \in \calC.
\end{multline}
Assuming that $f - \bsbeta^T\bsg \in \calC$ for all $\bsbeta$, it makes 
sense to choose $\bsbeta$ to minimize the rightmost term.  There seems to be some 
advantage to choose 
$\bsbeta$ based on $\widetilde{S}_{m-r,m}(f - \bsbeta^T\bsg), \ldots, 
\widetilde{S}_{m,m}(f - \bsbeta^T\bsg)$.  Our experience 
suggests that this strategy makes $\bsbeta$ less dependent on the fluctuations of the 
discrete  
Fourier coefficients over a small range of wave numbers.  In summary,
\begin{equation*}
\bsbeta_{\textup{qMC}}= \argmin_\bsb \sum_{t=0}^r \widetilde{S}_{m-t,m}(f - \bsb^T\bsg) 
= \argmin_\bsb  
\sum_{\kappa=\left \lfloor 2^{m-r-1}\right \rfloor}^{2^{m}-1} 
\abs{\tilde{f}_{m,\kappa} - \bsb^T\tilde{\bsg}_{m,\kappa}}.
\end{equation*}
As already noted in \cite{HicEtal03}, the optimal control variate coefficients for IID and 
low discrepancy sampling are generally different. Whereas $\bsbeta_{\textup{MC}}$ may 
be strongly influenced by low wavenumber Fourier coefficients of the integrand, 
$\bsbeta_{\textup{qMC}}$ depends on rather high wavenumber Fourier coefficients.

Minimizing the sum of absolute values is computationally more time consuming than 
minimizing the sum of squares.  Thus, in practice we choose $\bsbeta$ to be
 \begin{equation*}
\widetilde{ \bsbeta}_{\textup{qMC}}= \argmin_\bsb  
 \sum_{\kappa=\left \lfloor 2^{m-r-1}\right \rfloor}^{2^{m}-1} \abs{\tilde{f}_{m,\kappa} - 
 \bsb^T\tilde{\bsg}_{m,\kappa}}^2.
 \end{equation*}
This choice performs well in practice.  Moreover, we often find that there is little 
 advantage to updating $\widetilde{ \bsbeta}_{\textup{qMC}}$ for each $m$.

\begin{example}
Control variates may be used to expedite the pricing of an exotic option when one 
can identify a similar option whose price is known exactly.  This often happens with 
geometric Brownian motion asset price models.  The \emph{geometric} mean Asian 
payoff is a good control variate 
for estimating the price of an \emph{arithmetic} mean Asian option. The two payoffs are,
\begin{align*}
f(\bsx) & = \E^{-r T}\max\left(\frac{1}{d}\sum_{j=1}^d S_{t_j}(\bsx)-K, 0\right) = 
\text{arithmetic mean Asian call}, \\
g(\bsx) & = \E^{-r T}\max\left(\left[\prod_{j=1}^d S_{t_j}(\bsx)\right]^{1/d}-K, 0\right)= 
\text{geometric mean Asian call}, \\
S_{t_j}(\bsx) & =  S_0\E^{(r-\sigma^2/2)t_j+\sigma Z_j(\bsx)} = \text{stock price at time } 
t_j, \\
\begin{pmatrix}
Z_1(\bsx) \\ \vdots \\ Z_d(\bsx)
\end{pmatrix}
& = \mathsf{A} 
\begin{pmatrix}
\Phi^{-1}(x_1) \\ \vdots \\ \Phi^{-1}(x_d)
\end{pmatrix}, \qquad \mathsf{A} \mathsf{A}^T = \mathsf{C} := \Bigl( 
\min(t_i,t_j)\Bigr)_{i,j=1}^d.
\end{align*}
Here $\mathsf{C}$ is the covariance matrix of the values of a Brownian motion at the discrete times $t_1, \ldots, t_d$.   We choose $\mathsf{A}$ via a principal component 
analysis (singular value) decomposition of $\mathsf{C}$ as this tends to provide quicker convergence to the answer than other choices of $\mathsf{A}$.  

The option parameters for this example are $S_0=100$, $r=2\%$, $\sigma=50\%$, 
$K=100$, and $T=1$. We employ weekly monitoring, so $d=52$, and   
$t_j=j/52$, where the option price is about $\$11.97$. Parameter $\tilde{ \bsbeta}_{\textup{qMC}}$ is 
estimated at the 
first iteration of the algorithm when $m=10$, but not updated for each $m$. For $\varepsilon_{\textrm{a}}= 0.01$ and 
$\varepsilon_{\textrm{r}}=0$, \texttt{cubSobol\_g} without control variates requires 
$16~384$ points while only $4~096$ when using control variates.

Fig.\ \ref{FTD:fig:testAMeanCV} shows the Fourier Walsh coefficients of the original 
payoff, $f$, and the function integrated using control variates, $h_{\tilde{ 
\beta}_{\textup{qMC}}} = f + 
\tilde{ \beta}_{\textup{qMC}}(\mu_g - g)$, with given 
$\tilde{ \beta}_{\textup{qMC}}=1.0793$, a typical value of $\beta$ chosen by our 
algorithm. The squares correspond to the coefficients 
in the sums $\widetilde{S}_{6,10}(f)$ and $\widetilde{S}_{6,10}(h_{\tilde{ 
\beta}_{\textup{qMC}}})$, respectively, which are used to bound the Sobol' cubature 
error.  The circles 
are the first coefficients from the dual net that appear in error bound 
\eqref{FTD:eq:error_as_Fourier_kappa}. From this Fig.\ we can appreciate how control 
variates reduces the magnitude of both the  squares and the  circles.
\begin{figure}[!ht]
\centering
\includegraphics[width=.45\textwidth]{\FTDFigDirectory/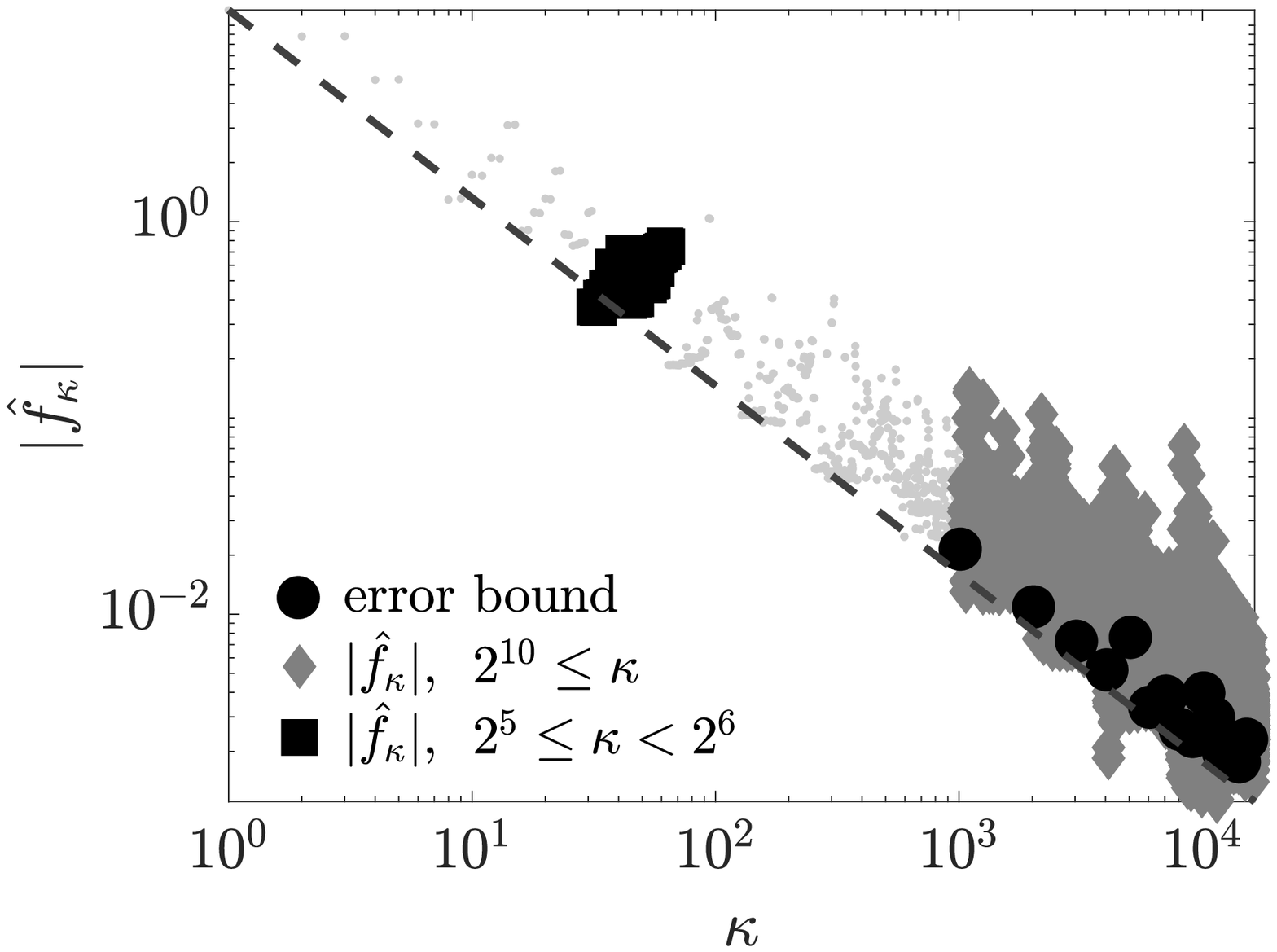} \qquad
\includegraphics[width=.45\textwidth]{\FTDFigDirectory/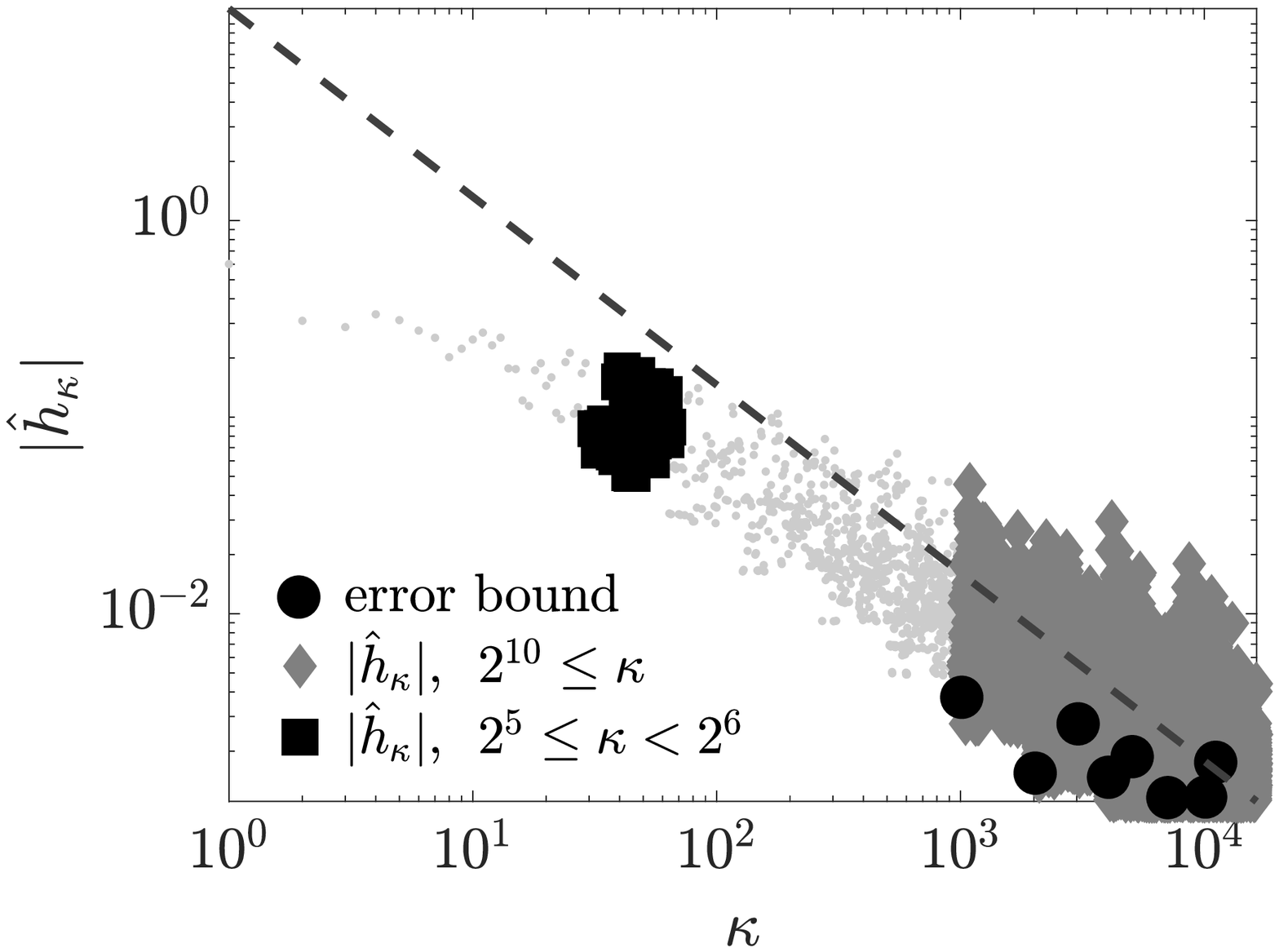}
\caption{Fourier Walsh coefficients for $f(\bsx)$ on the left and $h_{1.0793}(\bsx)$ on 
the right. The value $\tilde{\bsbeta}_{\textup{qMC}}$ effectively decreased the size of 
the coefficients involved in either the data-based error bound \eqref{FTD:eq:data_err_bd} 
and the error bound \eqref{FTD:eq:error_as_Fourier_kappa}.}\label{FTD:fig:testAMeanCV}
\end{figure}
\end{example}

\section{Discussion and Conclusion}
Ian Sloan has made substantial contributions to the understanding and practical 
application  of quasi-Monte Carlo cubature.
One challenge is how to choose the parameters that define these cubatures in 
commonly encountered situations where not much is known about the integrand.  These 
parameters include
\begin{enumerate}
	\renewcommand{\labelenumi}{\alph{enumi})}
	\item  the generators of the sequences themselves,
	\item  the sample size, $n$,
	\item the choice of importance sampling distributions, 
	\item the control variate coefficients \cite{HicEtal03}, 
	\item the parameters defining multilevel (quasi-)Monte Carlo methods \cite{Gil15a}, 
	and the 
	\item the parameters defining the multivariate decomposition method \cite{Was13b}.
\end{enumerate}
The rules for choosing these parameters should work well in practice, but not 
be simply heuristic as they are in the adaptive algorithms highlighted in the introduction.  
There should be a theoretical justification.  Item a) has  received 
much attention.  This article has addressed items b) and d).  We realize that the question 
of choosing $n$ is now replaced by the question of choosing the parameters defining the 
cone of integrands, $\calC$.  However, we have made
progress because when 
our adaptive algorithms fail, we can pinpoint the cause.  We hope for further 
investigations into the best way to choose $n$.  We also hope that further efforts 
will lead to more satisfying answers for the other items on the list.

As demonstrated in Sec.\ \ref{FTD:sec:GeneralError}, it is now possible to set relative 
error criteria or hybrid error criteria.  We also know now how to accurately estimate a 
function of several means.  In addition to the problem of Sobol' indices, this problem may 
arise in Bayesian inference, where the posterior mean of a parameter is the quotient of 
two integrals.
 
As already pointed out some years ago in \cite{HicEtal03}, the choice of control variate 
for IID sampling is not necessarily the right choice for low discrepancy sampling.  Here in 
Sec.\ \ref{FTD:sec:CV}, we have identified a natural way to determine a good control 
variate coefficient for digital sequence or lattice sequence sampling.

\bibliographystyle{spmpsci}
\bibliography{FJH23,FJHown23,thesis_lluisantoni}

\end{document}